\theoremstyle{plain}
\newtheorem{thm}{Theorem}[section]
\newtheorem{lem}[thm]{Lemma}
\newtheorem{prop}[thm]{Proposition}
\newtheorem{cor}[thm]{Corollary}
\theoremstyle{definition}
\newtheorem{dff}[thm]{Definition}
\newtheorem{rem}[thm]{Remark}
\numberwithin{equation}{section}
\def\C{\mathcal{C}}
\def\D{\mathcal{D}}
\def\F{\mathcal{F}}
\def\H{\mathcal{H}}
\def\U{\mathcal{U}}
\def\L{\mathcal{L}}
\def\P{\mathcal P}
\def\Q{\mathcal Q}
\def\phi{\varphi}
\def\p{\partial}
\def\pp{\phi}
\def\rz{\mathbb{R}}
\def\R{\rz}
\def\N{\mathbb{N}}
\def\wyr#1{\textit{#1}}
\def\s{\subset}
\def\t{\times}
\def\r{\rightarrow}
\def\ld{,\ldots,}
\def\pp{\partial}
\DeclareMathOperator{\diff}{Diff}
 \DeclareMathOperator{\gau}{Gau}
 \DeclareMathOperator{\cl}{cl}
  \DeclareMathOperator{\conj}{conj}
 \DeclareMathOperator{\graph}{graph}
 \DeclareMathOperator{\frag}{frag}
\DeclareMathOperator{\id}{id} 
\DeclareMathOperator{\intt}{Int} \DeclareMathOperator{\supp}{supp}
\keywords{Perfect group, locally continuously perfect, locally
smoothly perfect, uniformly perfect, manifold, homeomorphism,
diffeomorphism, conjugation-invariant norm, group of
homeomorphisms, fragmentation, deformation }
\subjclass{57S05, 22A05, 22E65}
\thanks{Partially supported by the Polish Ministry of Science and Higher Education and the
AGH grant n. 11.420.04}
\address{Faculty of Applied Mathematics, AGH University of Science and
\linebreak Technology, al. Mickiewicza 30, 30-059 Krak\'ow,
Poland} \email{tomasz@uci.agh.edu.pl}
\date{December 15, 2010}
\title{Locally continuously perfect   groups of homeomorphisms}
\author{Tomasz Rybicki}
\begin{document}

\maketitle

\begin{abstract}
The notion of a locally continuously perfect  group is introduced
and studied.  This notion generalizes locally smoothly perfect
groups introduced by Haller and Teichmann. Next, we prove that the
path connected identity component of the group of all
homeomorphisms of a manifold is locally continuously perfect.  The
case of equivariant homeomorphism group and other examples are
also considered.
\end{abstract}

\section{Introduction}
Recall that a group $G$ is \emph{perfect} if it is equal to its
own commutator subgroup $[G,G]$. This means that for any $g\in G$
there exist $r\in\N$ and $h_i,\bar h_i\in G$, $i=1\ld r$, such
that
\begin{equation} g=[h_1,\bar h_1]\ldots[h_r,\bar
h_r].\end{equation} When we consider the category of topological
groups a fundamental question arises whether $h_i,\bar h_i$ can be
chosen to be continuously dependent on $g$. More precisely, we
introduce the following notion.
\begin{dff}
A topological group $G$ will be called \emph{locally continuously
perfect } if there exist $r\in\N$, a neighborhood $U$ of $e$ in
$G$ and
 continuous mappings $S_i:U\r G$, $\bar S_i:U\r G$, $i=1\ld r$,
such that \begin{equation}g=[S_1(g),\bar
S_1(g)]\ldots[S_{r}(g),\bar S_{r}(g)]\end{equation} for every
$g\in U$. Moreover, we assume that $S_i(e)=e$ for all $i$. The
smallest $r$ as above will be denoted by $r_G$.
\end{dff}
Clearly any connected locally continuously perfect group is
perfect.

An analogous notion of a \emph{locally smoothly perfect group} in
the category of (possibly infinite dimensional) Lie groups have
been studied in the  paper by Haller and Teichmann \cite{ha-te},
where, for the first time, the problem of smooth dependence of
$h_i,\bar h_i$ on $g$ in (1.1) was put forward.  The main purpose
of the present paper is to show that the property of being locally
continuously perfect is even more common for homeomorphism groups
of manifolds than its smooth counterpart in \cite{ha-te} for
diffeomorphism groups of manifolds. In both cases very deep (but
completely different from each other) facts are exploited: our
main result is based on deformations in the spaces of imbeddings
of manifolds (Edwards and Kirby \cite{ed-ki}), while
 Haller and Teichmann used a simplicity theorem of Herman (\cite{Her73}) on
the diffeomorphism group of a torus and the small denominator
theory (or the KAM theory) in its background.

Throughout $M$ is a topological metrizable manifold, possibly with
boundary, and  $\H(M)$  denotes the path connected identity
component  of the group of all  compactly supported homeomorphisms
of $M$ endowed with the graph topology (\cite{KM}) (or the
majorant topology \cite{ed-ki}). By a \emph{ball} in $M$ we will
mean rel. compact, open ball imbedded with its closure in $M$.
Similarly we define a half-ball if $M$ has boundary. For $M$
compact, let $d_M$ is the smallest integer such that
$M=\bigcup_{i=1}^{d_M}B_i$ where $B_i$ is a ball or half-ball for
each $i$.
\begin{thm} If $M$ is compact, then  $\H(M)$ is a locally continuously
perfect group (even more, it satisfies Def. 2.1 below) with
$r_{\H(M)}\leq d_M$. In particular, $\H(M)$ is perfect and simple
(if $M$ is connected).

\end{thm}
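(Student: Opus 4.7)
The plan is to combine two classical ideas, each made continuous (parametric) in $g$: a fragmentation of $g$ along the cover $\{B_i\}_{i=1}^{d_M}$, and the infinite--commutator trick of Anderson inside each ball or half-ball.

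First I would establish a continuous fragmentation. Choose a refinement with each $B_i'$ compactly contained in $B_i$ and still covering $M$. The Edwards--Kirby deformation theorem for spaces of imbeddings furnishes, on a sufficiently small neighborhood $U$ of $e$ in $\H(M)$, continuous maps $F_i:U\to\H(M)$ with $\supp F_i(g)\subset B_i$, $F_i(e)=e$, and
\[
g \;=\; F_1(g)\cdots F_{d_M}(g)\qquad\text{for all } g\in U.
\]
Informally, a $C^0$-small perturbation of the inclusion $\overline{B_1'}\hookrightarrow M$ extends, in a canonical and continuous way, to a $C^0$-small ambient homeomorphism supported near $\overline{B_1}$; applied inductively to peel off one fragment at a time, this yields the required decomposition. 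I expect this parametric upgrade of the classical fragmentation lemma to be the main technical obstacle, and essentially the place where the Edwards--Kirby machinery is consumed.

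Next, for each $i$, I would apply Anderson's trick inside $B_i$. Identifying $B_i$ with $\R^n$ (or $\R^n_+$ for a half-ball) and shrinking $U$ if necessary so that $\supp F_i(g)\subset B_i'$ for every $g\in U$, fix a homeomorphism $\alpha_i\in\H(M)$ supported in $B_i$ whose iterates satisfy that $\alpha_i^n(B_i')$, $n\geq 0$, are pairwise disjoint subsets of $B_i$ accumulating at a single point $p_i\in B_i$. For $f=F_i(g)$, set
\[
T_i(g) \;=\; \prod_{n\geq 0} \alpha_i^{\,n}\, f\, \alpha_i^{-n}.
\]
The disjointness of the supports makes the factors commute, the convergence to $p_i$ ensures that the infinite product converges in the graph topology to a homeomorphism of $M$, and $g\mapsto T_i(g)$ is continuous with $T_i(e)=e$. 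The same disjointness then yields $f=[T_i(g),\alpha_i]$ by a direct telescoping computation.

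Setting $S_i(g)=T_i(g)$ and $\bar S_i(g)=\alpha_i$ one obtains
\[
g \;=\; [S_1(g),\bar S_1(g)] \cdots [S_{d_M}(g),\bar S_{d_M}(g)],
\]
with every $S_i, \bar S_i$ continuous on $U$, $S_i(e)=e$, and exactly $d_M$ commutator factors, so $r_{\H(M)}\leq d_M$. Perfection of $\H(M)$ follows since, being path-connected, $\H(M)$ is generated by any neighborhood of $e$. For simplicity when $M$ is connected, I would invoke the standard Epstein-type argument: a non-trivial normal subgroup of $\H(M)$ must contain a non-trivial homeomorphism supported in some ball, hence by Anderson's trick contains every ball-supported fragment, and combined with the fragmentation above gives the whole group.
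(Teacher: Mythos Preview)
Your approach is essentially the same as the paper's: continuous fragmentation via Edwards--Kirby (the paper's Proposition~5.2) followed by the Mather/Anderson infinite-product commutator trick on each fragment (the paper's Lemma~2.4 and Corollary~2.7(1)), yielding one commutator per ball or half-ball and hence $r_{\H(M)}\le d_M$; the paper's proof of Theorem~1.2 is literally ``apply Proposition~5.2, then Lemma~2.4 and Corollary~2.7(1).'' The only small slip is that shrinking $U$ will not force $\supp F_i(g)\subset B_i'$, since homeomorphisms near the identity can still have large support; the fix is to run the fragmentation with respect to the inner cover $(B_i')$ (so that $\supp F_i(g)\subset B_i'$ automatically) and then perform the Anderson trick inside the larger $B_i$, exactly as the paper does with its pair $(B_i,U_i)$.
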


The fact that $\H(M)$ is perfect is an immediate consequence of
\cite{Mat71} and \cite{ed-ki}, Corollary 1.3. A special case was
already proved by Fisher \cite{fis}. Note that if we drop the
compactness assumption, $\H(M)$ is also perfect in view of an
argument based on Theorem 5.1 (also Theorem 5.1 in \cite{ed-ki}).
The group $\H(M)$ is simple as well, see e.g. \cite{li}.

\begin{thm} Let $M$ be an open  manifold such that $M=\intt(\bar M)$, where $\bar M$
is a compact manifold with boundary. Then $\H(M)$ is a  locally
continuously perfect group (and fulfils Def. 2.1). In particular,
$\H(M)$ is perfect and simple. Furthermore, $r_{\H(M)}\leq
d_{M}+2$. Here $d_M$ stands for the smallest integer such that
$M=P\cup \bigcup_{i=1}^{d_M}B_i$ where $B_i$ is an open ball  for
each $i$ and $P$ is a collar neighborhood of the boundary.

\end{thm}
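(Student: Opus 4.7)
The strategy is to reduce the open case to Theorem~1.2 via a continuous fragmentation subordinate to the cover $\{P,B_1,\ldots,B_{d_M}\}$, handle each interior ball piece by the same one-commutator production used in the compact case, and pay two extra commutators for the collar piece by exploiting the open end of $M$ as ``swindle room''.

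First I would establish a continuous fragmentation: a neighborhood $U$ of $e$ in $\H(M)$ and continuous maps $U\ni g\mapsto g_0,g_1,\ldots,g_{d_M}\in\H(M)$ satisfying $g_i(e)=e$, $\supp(g_0)\subset P$, $\supp(g_i)\subset B_i$ for $i\geq 1$, and
\[
g=g_0\, g_1\cdots g_{d_M}\qquad(g\in U).
\]
Such a fragmentation is a compactly supported counterpart of the Edwards--Kirby deformation argument \cite{ed-ki} that underlies the proof of Theorem~1.2. For each $i\geq 1$ the ball $B_i$ is relatively compact in $M$, and the localized one-commutator construction used in the proof of Theorem~1.2 (applied to the single ball $B_i$) yields continuous maps $S_i,\bar S_i:U\to\H(M)$ with $S_i(e)=\bar S_i(e)=e$ and $g_i=[S_i(g),\bar S_i(g)]$. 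This accounts for $d_M$ commutators.

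The remaining factor $g_0$ is supported in $P\cong\partial\bar M\times(0,1)$, whose open end at $t\to 0$ corresponds to the ideal boundary $\partial\bar M$. After shrinking $U$ I may assume that $\supp(g_0)$ is contained in a common compact subcollar $K=\partial\bar M\times[a,b]$. I would then fix once and for all a compactly supported $\sigma\in\H(M)$ acting as the contraction $(x,t)\mapsto(x,t/2)$ in a neighborhood of $K$, so that the forward iterates $\sigma^k K$ are pairwise disjoint and accumulate at the ideal boundary. The formal identity $g_0=[H,\sigma]$ with $H=\prod_{k\geq 0}\sigma^k g_0\sigma^{-k}$ would realize $g_0$ as a single commutator, but $H$ has noncompact support clustering at $\partial\bar M$ and so does not lie in $\H(M)$. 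A splitting of this infinite product, realigned by an auxiliary compactly supported shift $\sigma'$, yields a two-commutator expression
\[
g_0=[A_1(g),B_1(g)]\cdot[A_2(g),B_2(g)]
\]
with all four factors compactly supported in $M$ and continuous in $g$ (vanishing at $g=e$). Combined with the previous step, this produces $r_{\H(M)}\leq d_M+2$; perfectness and simplicity then follow from the arguments cited after Theorem~1.1.

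The principal obstacle is the collar step: performing the swindle in a way that decomposes into two compactly supported commutators whose factors depend continuously on $g$ in the majorant topology. The key technical inputs are a $g$-independent choice of $\sigma$ valid on all of $U$ (guaranteeing convergence of the relevant infinite products), and a careful bookkeeping of the auxiliary shift $\sigma'$ so that the telescoping cancellations in the two commutators produce precisely $g_0$ and leave no uncompensated tail escaping to the ideal boundary.
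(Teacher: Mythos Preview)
Your overall plan (continuous fragmentation subordinate to $\{P,B_1,\ldots,B_{d_M}\}$, one commutator per ball via Lemma~2.4, and two extra commutators for the collar piece) matches the paper, but the collar step has a genuine gap.

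The claim ``after shrinking $U$ I may assume that $\supp(g_0)$ is contained in a common compact subcollar $K=\partial\bar M\times[a,b]$'' is false in the graph (majorant) topology when $M$ is noncompact. Every neighborhood of $\id$ in $\H(M)$ contains compactly supported homeomorphisms whose support extends arbitrarily close to the ideal boundary; there is no uniform $K$. The same obstruction kills the ``$g$-independent choice of $\sigma$'' you rely on: a compactly supported $\sigma\in\H(M)$ is the identity near the end of $M$, so its forward iterates $\sigma^kK$ cannot be pairwise disjoint and accumulate at the ideal boundary. Thus the swindle you sketch cannot be carried out inside $\H(M)$, and your proposed splitting into two compactly supported commutators does not go through.

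This is exactly the difficulty the paper's proof is designed to overcome. Instead of a single shift toward the end, the paper fixes an \emph{infinite} family of disjoint annuli $U_j=\partial\times(\tilde a_j,\tilde b_j)$ marching to the ideal boundary and applies the Edwards--Kirby deformation (Theorem~5.1) separately on each $U_j$ to straighten $\sigma_0(h)$ on a core $C_j$. This produces a continuous further factorization $\sigma_0=\sigma_0^1\cdot\sigma_0^2$ where each factor is supported in a countable disjoint union of product strips $\partial\times(\text{interval})$ in the collar. Each factor is then written as a single commutator by the product version of the Mather trick (Corollary~2.7(2)) applied strip by strip; the shift homeomorphisms live inside the individual strips and hence are genuinely compactly supported. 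The continuity in $h$ (in the graph topology) is exactly what the infinite family of local deformations buys you, and it is the piece your argument is missing.
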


 It is doubtful whether $\H(M)$ is locally continuously
perfect without the assumption of Theorem 1.3. Observe that
Theorems 1.2 and 1.3 are also true for isotopies (Corollary
3.3).

In the next three sections we present miscellaneous notions,
examples, facts and problems related to locally continuously
perfect groups.  The proofs of Theorems 1.2 and 1.3, making use of
subtle and difficult techniques of Edwards and Kirby in
\cite{ed-ki}, are presented in section 5.   The case of
$G$-equivariant homeomorphisms is investigated in section 6.

\section{Relative notions and basic lemma}
In order to describe the structure of homeomorphism  groups of
manifolds it will be useful to strengthen slightly Def.1.1.
\begin{dff}
 A topological group $G$ is \emph{locally continuously perfect (in a stronger sense)}
if there are $r\in\N$, a neighborhood $U$ of $e\in G$, elements
$h_1\ld h_r\in G$ and continuous mappings $S_i:U\r G$ with
$S_i(e)=e$, $i=1\ld r$, satisfying \begin{equation}
g=[S_1(g),h_1]\ldots[S_r(g),h_r]\end{equation} for all $g\in U$.
\end{dff}
Let us  "globalize" the notion of local continuous perfectness.

\begin{dff}
 A group $G$ is called \emph{uniformly perfect} (see, e.g., \cite{tsu2}),
 if $G$ is perfect and
there is $r\in\N$ such that any $g\in G$ can be expressed by (1.1)
for some $h_i,\bar h_i\in G$, $i=1\ld r$. Next, a topological
group $G$ is said to be \emph{continuously  perfect} if there
exist $r\in\N$ and continuous mappings $S_i:G\r G$, $\bar S_i:G\r
G$, $i=1\ld r$, satisfying the equality (1.2) for all $g\in G$.
\end{dff}
Of course, every continuously perfect group is uniformly perfect.

In early 1970's  Thurston proved that  the identity component of
the group of compactly supported diffeomorphism of class
$C^{\infty}$ of a manifold $M$ is perfect and simple (see
\cite{thu74}, \cite{Ban97}). The proof is based on Herman's
theorem \cite{Her73}.  Next, Thurston's theorem was extended to
groups of $C^r$-diffeomorphisms  where $r\neq\dim(M)+1$
(\cite{Mat74}), and to classical diffeomorphism groups
(\cite{Ban97},
 \cite{ry5}).
Recently, the problem of uniform perfectness of diffeomorphism
groups has  been studied in \cite{bip}, \cite{tsu2} and
\cite{ry6}. In contrast to the problem of perfectness and
simplicity, the obtained results depend essentially on the
topology of the underlying manifold.  However, in most cases (with
some exceptions presented below)
  difficult open problems arise whether the groups in question
 satisfy Def. 1.1, 2.1, or 2.2, or whether they are locally smoothly perfect.


The following type of fragmentations is important when studying
 groups of homeomorphisms.
\begin{dff} Let  $\U$ be an open  covering of $M$.
 A subgroup $G\s \H(M)$ is  \emph{locally continuously factorizable with respect to $\U$}
  if  for any finite subcovering $(U_i)_{i=1}^d$ of $\U$, there
 exist a neighborhood $\P$ of $\id\in G$ and continuous mappings $\sigma_i:\P\r G$, $i=1\ld d$,
  such that   for all $f\in
 \P$ one has \begin{equation*}f=\sigma_1(f)\ldots \sigma_d(f),\quad \supp(\sigma_i(f))\s U_i, \forall i.
 \end{equation*}
 \end{dff}

 Given a subset $S\s M$, by
$\H_S(M)$  we denote the path connected identity component of the
subgroup of all elements of $\H(M)$ with compact support contained
in $S$.

Using the Alexander trick, we have that $\H(\R^n)$ coincides with
the group of all compactly supported homeomorphisms of $\R^n$. In
fact, if $\supp(g)$ is compact, we define an isotopy $g_{t} :
\mathbb{R}^{n} \rightarrow \mathbb{R}^{n}$, $t \in I$, from the
identity to $g$, by

\begin{equation*}
g_{t}(x)= \left\{
\begin{array}{lcl}
tg\left( \frac{1}{t}x \right)& for & t>0\\
x&for&t=0.
\end{array} \right.
\end{equation*}

In particular, for every ball $B$ in $M$ the group $\H_B(M)$
consists of all homeomorphisms compactly supported in $B$.

The following fact, with a straightforward  proof, plays a basic
role in studies on homeomorphism groups.
\begin{lem}\cite{Mat71}(Basic lemma)
Let $B\s M$ be a ball and $U\s M$ be an open subset such that
$\cl(B)\s U$. Then there are $\phi\in\H_U(M)$ and a continuous
mapping $S:\H_B(M)\r \H_U(M)$ such that $h=[S(h),\phi]$ for all
$h\in\H_B(M)$.

\end{lem}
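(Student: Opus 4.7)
The plan is to apply the classical ``infinite conjugation'' trick due to Mather \cite{Mat71}. First I would construct a homeomorphism $\phi\in\H_U(M)$ whose forward iterates compress $\cl(B)$ to a single point. Concretely, choose a ball $B'$ with $\cl(B)\s B'$ and $\cl(B')\s U$, pick a point $p\in B'\setminus\cl(B)$, and construct $\phi$ supported in $B'$ (hence in $U$) with the property that the sets $\phi^n(\cl(B))$, $n\geq 0$, are pairwise disjoint and shrink in diameter to $\{p\}$. This is a standard construction in a coordinate chart on $B'$ --- for instance, take the time-one flow of a compactly supported vector field on $B'$ having $p$ as an attracting fixed point and sweeping $\cl(B)$ toward $p$.

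Next I would set
$$S(h):=\prod_{n=0}^{\infty}\phi^n h\phi^{-n}.$$
Because the factors have pairwise disjoint supports $\phi^n(\cl(B))$, at each point at most one of them acts nontrivially, so the infinite product is well defined pointwise and gives a bijection of $M$ with $\supp(S(h))\s\bigcup_{n\geq 0}\phi^n(\cl(B))\cup\{p\}\s U$. Continuity of $S(h)$ away from $p$ is immediate; at $p$ it follows from the shrinking property together with the observation that $S(h)$ preserves each $\phi^n(\cl(B))$, so its oscillation near $p$ is controlled by the tail diameters. Thus $S(h)\in\H_U(M)$, and evidently $S(\id)=\id$.

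The commutator identity is a short formal computation: by disjointness of supports,
$$\phi S(h)^{-1}\phi^{-1}=\prod_{n\geq 1}\phi^n h^{-1}\phi^{-n},$$
and all factors involved (in this product and in $S(h)$) commute with each other. In $[S(h),\phi]=S(h)\cdot\phi S(h)^{-1}\phi^{-1}$ the terms with $n\geq 1$ cancel pairwise, leaving only the $n=0$ factor of $S(h)$, which is $h$.

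The main technical obstacle, and the step that I expect to demand most care, is the continuity of $h\mapsto S(h)$ in the graph (majorant) topology. Given a basic neighborhood of $S(h_0)$ prescribed by a positive continuous function $\ee$ on $M$, I would choose $N$ so large that for every $n\geq N$ the set $\phi^n(\cl(B))$ has diameter below the tolerance dictated by $\ee$; on the tail $\bigcup_{n\geq N}\phi^n(\cl(B))$ both $S(h)$ and $S(h_0)$ displace points only within these small invariant sets, regardless of $h$, while on the finite union $\bigcup_{n<N}\phi^n(\cl(B))$ continuity of conjugation by the fixed $\phi^n$ converts closeness of $h$ to $h_0$ into closeness of $\phi^n h\phi^{-n}$ to $\phi^n h_0\phi^{-n}$ on each piece. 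The identical splitting works for the inverses $S(h)^{-1}=\prod_n\phi^n h^{-1}\phi^{-n}$, so both $S(h)$ and its inverse depend continuously on $h$, which is exactly the required continuity in the graph topology.
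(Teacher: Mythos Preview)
Your proposal is correct and follows essentially the same ``infinite conjugation'' argument as the paper: choose a larger ball $B'$, a point $p$ to which the iterates $\phi^n(B)$ accumulate, and define $S(h)$ as $\phi^k h\phi^{-k}$ on the $k$-th copy. The only cosmetic differences are that the paper places $p$ on $\partial B'$ (so the family of translated balls is locally finite in $B'$) and chooses the target balls $B_k$ first before selecting $\phi$ to realize $\phi(B_{k-1})=B_k$, whereas you build $\phi$ directly as a contraction toward an interior point; also, your example of $\phi$ as a time-one flow of a vector field tacitly assumes a smooth chart, but the explicit formula one would write down in coordinates works in the purely topological setting as well.
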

\begin{proof} First choose a larger ball $B'$ such that $\cl(B)\s
B'\s\cl(B')\s U$. Next, fix $p\in\pp B'$ and set $B_0=B$. There
exists a sequence of balls $(B_k)_{k=1}^{\infty}$ such that
$\cl(B_k)\s B'$ for all $k$, the family $(B_k)_{k=0}^{\infty}$ is
pairwise disjoint, locally finite in $B'$, and $B_k\r p$ when
$k\r\infty$. Choose a homeomorphism $\phi\in\H_U(M)$ such that
$\varphi(B_{k-1})=B_k$ for $k=1,2,\ldots$. Here we use the fact
that $\H_U(M)$ acts transitively on the family of balls in $B'$
(c.f. \cite{hir}).

Now we define a continuous homomorphism $S:\H_B(M)\r\H_U(M)$ by
the formula
$$ S(h)=\phi^kh\phi^{-k}\quad\hbox{on}\,B_k,\ k=0,1,\ldots$$
and $S(h)=\id$ outside $\bigcup_{k=0}^{\infty}B_k$. It is clear
that $h=[S(h),\phi]$, as required.
\end{proof}

\begin{rem} (1) Perhaps for the first time the above reasoning appeared in Mather's paper
\cite{Mat71}. Actually Mather proved also the acyclicity of
$\H(\R^n)$. Obviously, \cite{Mat71} and Lemma 2.4 are no longer
true for $C^1$ homeomorphisms. However, Tsuboi brilliantly
improved this reasoning and adapted it for $C^r$-diffeomorphisms
with small $r$, see \cite{tsu89}.

(2) It is likely that the basic lemma is no longer true for
$\H(M)$ instead of $\H_B(M)$ and $\H_U(M)$. In fact, consider
$\H(\R^n)$ and a weaker Def.1.1. If one tried to repeat the proof
of Lemma 2.4 then one would have continuous maps $S, \bar
S:\H(\R^n)\supset U\r\H(\R^n)$, where $\bar S(h)$ would play a
role of the shift homeomorphism $\phi$ for $h\in\H(\R^n)$. But
then $\bar S(h)$ depends somehow on the support of $h$. On the
other hand, there are arbitrarily close to id elements of
$\H(\R^n)$ with arbitrarily large support. This would spoil the
continuity of $\bar S$.
\end{rem}

Given a foliated manifold $(M,\F)$, a mapping $h:M\r M$ is
\emph{leaf preserving} if $h(L)\s L$ for all $L\in\F$. Let
$\H(M,\F)$ be the path connected identity component of the group
of all leaf preserving homeomorphisms of $(M,\F)$.
\begin{cor} Let $\F_k=\{\R^k\t\{pt\}\}$, $k=1\ld n-1$, be the
product foliation of $\R^n$. If $B=I\t \R^{n-k}$ and $U=J\t
\R^{n-k}$, where $I, J\s\R^k$  are open intervals such that
$\cl(I)\s J$, then there exist $\phi\in\H_U(\R^n,\F_k)$ and a
continuous mapping $S:\H_B(\R^n,\F_k)\r \H_U(\R^n,\F_k)$ such that
$h=[S(h),\phi]$ for all $h\in\H_B(\R^n,\F_k)$.

\end{cor}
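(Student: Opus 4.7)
The plan is to repeat the proof of Lemma 2.4, but to keep every ingredient leaf-preserving by exploiting the product structure of $\F_k$. The key observation is that on $\R^n=\R^k\t\R^{n-k}$ any $\psi\in\H(\R^k)$ lifts to a leaf-preserving homeomorphism $\phi:=\psi\t\id_{\R^{n-k}}$ of $(\R^n,\F_k)$: each leaf $\R^k\t\{y_0\}$ is carried to itself, and the action along the leaf is $\psi$.

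First I would pick an intermediate open box $I'\s\R^k$ with $\cl(I)\s I'\s\cl(I')\s J$, and a point $p\in\partial I'$. Running the construction from the proof of Lemma 2.4 inside $\R^k$ with the triple $(I,I',J)$ in place of $(B,B',U)$, I would obtain $\psi\in\H_J(\R^k)$ together with a sequence $(I_k)_{k\geq 0}$ of pairwise disjoint open boxes, $I_0=I$, locally finite in $I'$ and accumulating at $p$, satisfying $\psi(I_{k-1})=I_k$ for every $k\geq 1$. Setting $\phi(x,y):=(\psi(x),y)$ and $B_k:=I_k\t\R^{n-k}$, one immediately has $\phi\in\H_U(\R^n,\F_k)$ and $\phi(B_{k-1})=B_k$ for all $k\geq 1$.

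Then for $h\in\H_B(\R^n,\F_k)$ I would define $S(h)$ to equal $\phi^k h\phi^{-k}$ on $B_k$ for each $k\geq 0$ and to be the identity on the complement of $\bigcup_{k\geq 0}B_k$. Since both $h$ and $\phi$ preserve each leaf of $\F_k$, so do all the conjugates $\phi^k h\phi^{-k}$, hence $S(h)\in\H_U(\R^n,\F_k)$. Local finiteness of $(B_k)$ in $B'=I'\t\R^{n-k}$ guarantees that $S(h)$ is a well-defined homeomorphism and that $S$ is continuous in the graph topology, exactly as in the proof of Lemma 2.4. The identity $h=[S(h),\phi]$ is verified by the same case analysis: on $B_0=B$ one gets $h$ because $\phi^{-1}(B_0)$ is disjoint from $\bigcup_{k\geq 0}B_k$, while on $B_j$ with $j\geq 1$ the cascade telescopes to the identity. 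No substantive obstacle arises; the only genuine change compared to the Basic Lemma is the product form of $\phi$, which makes it automatically leaf-preserving.
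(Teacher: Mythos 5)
Your proof is correct and follows essentially the same route as the paper, which simply says to repeat the construction of $\phi$ and $S(h)$ from Lemma~2.4 in $\H_I(\R^k)$, $\H_J(\R^k)$ and then ``multiply everything by $\id_{\R^{n-k}}$.'' You spell this out more carefully than the paper does: the shift $\psi$ and the disjoint family $(I_j)$ are built in the $\R^k$-factor, the lift $\phi=\psi\times\id_{\R^{n-k}}$ is automatically leaf-preserving, and $S(h)$ is then defined by conjugation on the slabs $B_j=I_j\times\R^{n-k}$ rather than by any literal ``product by $\id$'' (which, taken at face value, could not produce a map defined on all of $\H_B(\R^n,\F_k)$). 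Your verification of the commutator identity and the continuity of $S$ via local finiteness matches the intended argument.

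One point you might have flagged (it is really an imprecision in the paper's statement, inherited by its own proof): as written, $\phi=\psi\times\id_{\R^{n-k}}$ does not have compact support, so strictly speaking it fails the definition of $\H_U$ given earlier for $\H_S(M)$. The paper clearly intends a relaxed support convention in the foliated setting; your construction produces a compactly supported $S(h)$ (since $\supp(h)$ is compact, $\supp(S(h))\subset\bigl(\,\overline{\bigcup_j\psi^j(K_1)}\,\bigr)\times K_2$ is compact), but $\phi$ itself cannot be made compactly supported by this method, so it is worth noting that the corollary should be read with that looser convention.
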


\begin{proof}
Consider $\H_I(\R^k)$ and $\H_J(\R^k)$, and repeat the
construction of $\phi$ and $S(h)$ from the proof of Lemma 2.4.
Then multiply everything by $\id_{\R^{n-k}}$.
\end{proof}

\begin{cor} Assume that either
\begin{enumerate}\item M is a manifold with boundary and $B, U\s
M$ such that $B$ is a half-ball, $U$ is open with $\cl(B)\s U$; or
\item $M=N\t\R$, where $N$ is a manifold, and $B=N\t I$, $U=N\t J$
where $I, J\s\R$ are open intervals with $\cl(I)\s
J$.\end{enumerate} Then the assertion of Lemma 2.4 holds.
\end{cor}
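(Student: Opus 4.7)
The plan is to repeat the proof of Lemma 2.4 in each of the two settings: produce a pairwise disjoint, locally finite family $(B_k)_{k\geq0}$ of ``translates'' of $B$ inside an enlargement $B'$ of $B$ with $\cl(B')\s U$ and $B_0=B$, together with a shift homeomorphism $\phi\in\H_U(M)$ satisfying $\phi(B_{k-1})=B_k$ for all $k\geq1$. Once this geometric data is in place, the formula $S(h)=\phi^kh\phi^{-k}$ on $B_k$ and $S(h)=\id$ outside $\bigcup_{k\geq0}B_k$ defines a continuous homomorphism $S\colon\H_B(M)\r\H_U(M)$ with $h=[S(h),\phi]$, by the computation already carried out in the proof of Lemma 2.4.

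For case (1), I would take $B'$ to be a slightly larger half-ball with $\cl(B)\s B'\s\cl(B')\s U$, fix a point $p\in\pp B'\cap\pp M$, and select a pairwise disjoint, locally finite family of half-balls $B_k\s B'$, each meeting $\pp M$, with $B_0=B$ and $B_k\r p$. The shift $\phi\in\H_{B'}(M)\s\H_U(M)$ is then built exactly as in Lemma 2.4, using the half-ball analogue of the transitivity fact invoked there from \cite{hir}: that $\H_{B'}(M)$ acts transitively on half-balls contained in $B'$. This reduces via an Alexander-type coordinate argument to transitivity of the compactly supported homeomorphism group of the model half-space $\R^{n-1}\t[0,\infty)$ on its half-balls, and is, in my view, the only point beyond Lemma 2.4 that needs a separate verification in this case.

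For case (2), I would first apply Lemma 2.4 in dimension one to the pair $\cl(I)\s J\s\R$ to produce $\phi_0\in\H_J(\R)$ and open intervals $I_0=I,I_1,I_2,\ldots\s J$ with $\phi_0(I_{k-1})=I_k$, and then set $\phi:=\id_N\t\phi_0$ and $B_k:=N\t I_k$; clearly $\phi\in\H_U(M)$ and $\phi(B_{k-1})=B_k$. Here the delicate point is not the commutator identity but verifying that $S(h)\in\H_U(M)$, that is, that $\supp(S(h))$ is compact, even though the sets $B_k=N\t I_k$ are not when $N$ is non-compact. For $h\in\H_B(M)$ with compact support $K\s N\t I$, the projections $\pi_N(K)\s N$ and $\pi_\R(K)\s I$ are compact, and
\[
\supp(S(h))\s\pi_N(K)\t\cl\Bigl(\bigcup_{k\geq0}\phi_0^k(\pi_\R(K))\Bigr),
\]
a product of compact sets, hence compact in $U$. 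The product form $\phi=\id_N\t\phi_0$ is essential for this estimate. Continuity of $S$ at $\id$ then follows from the local finiteness of $\{B_k\}$ and the continuity of conjugation by a fixed homeomorphism in the graph topology, and it propagates to all of $\H_B(M)$ because $S$ is a group homomorphism.
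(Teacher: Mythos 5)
Your proposal is correct and follows essentially the same route as the paper. The paper disposes of Corollary~2.7 in one line (``The proof is analogous to the above''), where ``the above'' is the proof of Corollary~2.6: run the one-dimensional construction of Lemma~2.4 on the interval pair $(I,J)$, then multiply by the identity on the complementary factor; for case~(1) replace balls by half-balls and take the accumulation point $p\in\partial B'\cap\partial M$, with the half-ball transitivity fact standing in for the ball version. That is exactly what you do. The one thing you add beyond the paper's terse remark is an explicit check that $\supp(S(h))$ stays compact inside $U=N\times J$ when $N$ is non-compact, via the estimate $\supp(S(h))\subset\pi_N(K)\times\cl\bigl(\bigcup_{k\geq0}\phi_0^k(\pi_{\R}(K))\bigr)$; this is worth spelling out (and your observation that the product form $\phi=\id_N\times\phi_0$ is what makes it work is the right diagnosis), but it is a detail the paper implicitly relies on rather than a different argument.
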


The proof is analogous to the above.

\section{Examples}
First examples are provided by Theorems 1.2 and 1.3, and by
Theorem 6.2 below.

{\bf 1.} For a smooth manifold $M$ let $\D^r(M)$  stand for the
subgroup of all elements of $\diff^r(M)$  that can be joined to
the identity by a compactly supported isotopy in
 $\diff^r(M)$, where $r=1\ld\infty$.

 Let $G$ be a possibly infinite dimensional Lie group  which is simultaneously a topological group.
  If $G$ is also
locally smoothly perfect (see Def. 1 in Haller and Teichmann
\cite{ha-te}) then obviously $G$ is  locally continuously perfect
(even satisfies Def. 2.1).

In particular, the following groups are locally smoothly perfect
(and a fortiori locally continuously perfect).
\begin{enumerate}
\item Any finite dimensional perfect Lie group $G$; we then have
$r_G\leq\dim G$ (\cite{ha-te}). \item Any real semisimple Lie
group $G$; then $r_G=2$ (\cite{ha-te}). \item $\D^{\infty}(\mathbb
T^n)$ for the torus $\mathbb T^n$ with $r_{\D^{\infty}(\mathbb
T^n)}\leq 3$ (\cite{Her73}). \item Let $M$ be a closed smooth
manifold being the total space of $k$ locally trivial bundles with
fiber $\mathbb T^{n(i)}$, $i=1\ld k$, such that the corresponding
vertical distributions span $TM$. Then $\D^{\infty}(M)$ is locally
smoothly perfect. In particular, the assumption is satisfied for
odd dimensional spheres and for any compact Lie group $G$, and one
has $r_{\D^{\infty}(\mathbb S^3)}\leq 18$ and
$r_{\D^{\infty}(G)}\leq 3(\dim G)^2$ (see \cite{ha-te}).
\end{enumerate}

\begin{rem}
To obtain the theorem mentioned in (4) Haller and Teichmann used a
fruitful method of decomposing diffeomorphisms into fiber
preserving ones. This method enabled an application of the deep
Herman's theorem stating that $\D^{\infty}(\mathbb T^n)$ is not
only perfect and simple but also locally smoothly perfect. Notice
that this method was already considered in the "generic" case of
$M=\R^n$ in \cite{ry2} to study the (still open) problem of the
perfectness of $\D^{n+1}(M^n)$ by  means of the possible
perfectness property of the group of leaf preserving
$C^r$-diffeomorphisms with $r$ large. For a hypothetical  proof of
such a result one would apply Mather's proof of the simplicity of
$\D^r(M^n)$, $r\neq n+1$, from \cite{Mat74}. See also \cite{Mat74}
III, \cite{ry1}, \cite{LR} for the problem of the perfectness of
leaf preserving diffeomorphism groups.

Notice as well that recently Tsuboi in \cite{tsu3} used a similar
method in the proofs of
 perfectness theorems for  groups of real-analytic
 diffeomorphisms in absence of the fragmentation property.

\end{rem}

It seems likely that the groups $\D^r(\R^n)$ with small $r$
(depending on $n$) would be continuously perfect. By using his own
method Tsuboi in \cite{tsu89} generalized Mather's method from
\cite{Mat71}, reproved the simplicity theorem for
$C^r$-diffeomorphisms with $1\leq r\leq n$ (originally proved by
Mather \cite{Mat74}, II), and showed the vanishing of lower order
homologies of the groups $\D^r(\R^n)$. However, a possible
analysis of a very technical proof in \cite{tsu89} is beyond the
scope of the present paper. We may also ask whether $\D^r(\R^n)$
is (locally) $C^r$-smoothly perfect.
\medskip

{\bf 2.} It is very likely that theorems analogous to Theorems 1.2
and 1.3 can be obtained for the groups of Lipschitz homeomorphisms
on Lipschitz manifolds. See Theorem 2.2 and other results in Abe
and Fukui \cite{AF01}.
\medskip

{\bf 3.} Now we consider permanence properties of locally
continuously perfect groups. These properties provide further
examples of such groups.

 Let $H\s G$ be a subgroup and $G$ be  locally continuously perfect
  with continuous mappings $S_i:U\r G$, $\bar S_i:U\r G$,
$i=1\ld r$, satisfying (1.2). If $S_i(U\cap H)\s H$ and $\bar
S_i(U\cap H)\s H$ for all $i$ then $H$ is also  continuously
perfect. Corollary 2.6 illustrates this situation.

If $G$ and $H$ are locally continuously perfect groups then so is
its product $G\t H$.

For a compact manifold $M$ and a topological group $G$, let
$\C(M,G)$ stand for the group of continuous maps $M\r G$ with the
pointwise multiplication and the compact-open topology. Note that
$\C(M,G)$ can be viewed as an analogue of the current group (c.f.
\cite{KM}).

\begin{prop} If $G$ is locally continuously perfect then so is $\C(M,G)$ and $r_{\C(M,G)}=r_G$.
\end{prop}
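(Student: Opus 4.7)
The plan is to transfer the local continuous perfectness structure from $G$ to $\C(M,G)$ pointwise, using that the group operations in $\C(M,G)$ are defined pointwise and that postcomposition is continuous in the compact-open topology.

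Set $r := r_G$ and let $U \s G$, $S_i, \bar S_i : U \r G$ be the data from Def.~1.1, with $S_i(e)=e$. Define
$$\tilde U := \{f \in \C(M,G) : f(M) \s U\}.$$
Since $M$ is compact, $\tilde U$ is a subbasic open set in the compact-open topology, and it contains the constant map $\hat e$ at $e$, hence it is a neighborhood of the identity in $\C(M,G)$. Define $\tilde S_i, \tilde{\bar S}_i : \tilde U \r \C(M,G)$ by postcomposition, $\tilde S_i(f) := S_i \ci f$ and $\tilde{\bar S}_i(f) := \bar S_i \ci f$.

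Now I verify the conditions of Def.~1.1 for $\C(M,G)$. The commutator identity (1.2) holds in $\C(M,G)$ because evaluating both sides at any $x \in M$ reduces to the same identity in $G$ for $f(x) \in U$. The normalization $\tilde S_i(\hat e) = \hat e$ is immediate from $S_i(e)=e$. The only substantive point is continuity of postcomposition: if $V(K,W) = \{h \in \C(M,G) : h(K)\s W\}$ is a subbasic open set, then its preimage under $\tilde S_i$ equals $\{f \in \tilde U : f(K) \s S_i^{-1}(W)\}$, which is open because $S_i$ is continuous. This establishes $r_{\C(M,G)} \leq r_G$.

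For the reverse inequality, exploit that $G$ is a continuous retract of $\C(M,G)$: fix $x_0 \in M$ and consider the continuous group homomorphisms $\iota : G \r \C(M,G)$, $g \mapsto \hat g$, and $\mathrm{ev}_{x_0} : \C(M,G) \r G$, $f \mapsto f(x_0)$, with $\mathrm{ev}_{x_0} \ci \iota = \id_G$. Given a locally continuously perfect structure $(\V, T_i, \bar T_i)$ on $\C(M,G)$, define on the open neighborhood $\iota^{-1}(\V)$ of $e \in G$ the maps $\sigma_i(g) := T_i(\hat g)(x_0)$ and $\bar\sigma_i(g) := \bar T_i(\hat g)(x_0)$. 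These are continuous compositions, satisfy $\sigma_i(e)=e$ since $T_i(\hat e) = \hat e$, and the commutator identity in $G$ follows by applying $\mathrm{ev}_{x_0}$ to the one for $\hat g$ in $\C(M,G)$. Hence $r_G \leq r_{\C(M,G)}$, giving equality. The proof is essentially mechanical; the one real ingredient is the standard fact that postcomposition by a continuous map is continuous in the compact-open topology, which is what upgrades the pointwise construction to a continuous one.
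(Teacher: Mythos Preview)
Your proof is correct and follows essentially the same approach as the paper: define the commutator maps on $\C(M,G)$ by postcomposition with the given $S_i,\bar S_i$ and verify the identity pointwise. You supply more detail than the paper does---explicitly checking that $\tilde U$ is open in the compact-open topology and that postcomposition is continuous---and you also prove the reverse inequality $r_G\leq r_{\C(M,G)}$ via the constant-map embedding and evaluation at a point, a step the paper leaves implicit.
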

\begin{proof} Let $S_i:U\r G$, $\bar S_i:U\r G$,
$i=1\ld r$, be as in Def. 1.1. Set $\U=\{f\in\C(M,G): f(M)\s U\}$
and define continuous maps $S_i^{\C}:\U\r \C(M,G)$, $\bar
S_i^{\C}:\U\r \C(M,G)$, $i=1\ld r$, by the formulae
$S_i^{\C}(f)(x)=S_i(f(x))$, where $f\in\C(M,G)$, $x\in M$, and
similarly for $\bar S_i^{\C}$. It follows that
$$ \prod_i[S_i^{\C}(f),\bar S_i^{\C}(f)](x)=\prod_i[S_i(f(x)),\bar
S_i(f(x))]=f(x)$$ for all $x\in M$. Thus
$f=\prod_i[S_i^{\C}(f),\bar S_i^{\C}(f)]$, as required. Observe
that for all $x\in M$ and $f\in\C(M,G)$, if $f(x)=e$ then
$S^{\C}_i(f)(x)=e$ for all $i$.
\end{proof}

 For a topological group $G$ we denote by $\P G=\{f:I\r G:\,
 f(0)=e\}$ the path group.
 \begin{cor}
 Theorems 1.2 and 1.3 hold for $\P\H(M)$. In
 other words, these theorems are true for isotopies.
 \end{cor}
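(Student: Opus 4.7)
The strategy is to realize $\P\H(M)$ as a subgroup of $\C(I,\H(M))$ to which the construction in Proposition 3.2 applies, and then to verify that the continuous ``commutator maps'' produced there preserve the defining condition $f(0)=e$.

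More precisely, I would first note that, since $I=[0,1]$ is compact, $\C(I,\H(M))$ is a topological group with the compact-open topology, and $\P\H(M)=\{f\in\C(I,\H(M)):f(0)=e\}$ is a closed topological subgroup. By Theorem 1.2 (respectively 1.3), $\H(M)$ is locally continuously perfect with bound $r_{\H(M)}\leq d_M$ (respectively $d_M+2$); let $U\subset\H(M)$ be the neighborhood of the identity and $S_i,\bar S_i:U\to\H(M)$ the continuous maps produced by Def.~1.1, with $S_i(e)=\bar S_i(e)=e$. Applying Proposition 3.2 with the compact manifold $I$ in place of $M$ (the argument there needs only that $\C(I,\H(M))$ is a topological group, which still holds for $I$ a compact polyhedron or interval) gives a neighborhood $\U=\{f:f(I)\subset U\}$ of the identity in $\C(I,\H(M))$ together with continuous maps $S_i^{\C},\bar S_i^{\C}:\U\to\C(I,\H(M))$ satisfying $f=\prod_i[S_i^{\C}(f),\bar S_i^{\C}(f)]$.

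The key observation, already recorded at the end of the proof of Proposition 3.2, is pointwise in character: $S_i^{\C}(f)(t)=S_i(f(t))$ and $\bar S_i^{\C}(f)(t)=\bar S_i(f(t))$, so whenever $f(t)=e$ one has $S_i^{\C}(f)(t)=e$ and likewise for $\bar S_i^{\C}$. Applied at $t=0$, this shows that $f\in\U\cap\P\H(M)$ implies $S_i^{\C}(f),\bar S_i^{\C}(f)\in\P\H(M)$. Consequently the restrictions of $S_i^{\C}$ and $\bar S_i^{\C}$ to the neighborhood $\U\cap\P\H(M)$ of the identity in $\P\H(M)$ take values in $\P\H(M)$, yielding the desired factorization inside $\P\H(M)$; this is exactly the subgroup permanence principle recorded at the beginning of Example~3 of Section~3.

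No real obstacle is expected: the whole content is the bookkeeping identity $S_i^{\C}(f)(0)=S_i(e)=e$, which is inherited from the normalization $S_i(e)=e$ built into Def.~1.1. The same argument simultaneously gives $r_{\P\H(M)}\leq r_{\H(M)}$, so the quantitative bounds of Theorems 1.2 and 1.3 carry over verbatim, and also Def.~2.1 is preserved since one can take the same fixed elements $h_i$ viewed as constant paths in $\P\H(M)$.
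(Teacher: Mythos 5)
Your overall strategy — realizing $\P\H(M)$ as a subgroup of $\C(I,\H(M))$, applying Proposition 3.2 with $I$ in place of $M$, and then restricting to paths starting at the identity via the observation at the end of that proof — is exactly what the paper intends. However, the step in which you assert ``and likewise for $\bar S_i^{\C}$'' is unjustified and in fact false for the constructions underlying Theorems 1.2 and 1.3, and this is a genuine gap.

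Definition 1.1 normalizes only $S_i$ by $S_i(e)=e$; it imposes no such condition on $\bar S_i$. Correspondingly, the paper's remark at the end of Proposition 3.2 asserts only that $f(x)=e$ forces $S_i^{\C}(f)(x)=e$, and says nothing about $\bar S_i^{\C}$. In the proofs of Theorems 1.2 and 1.3 one actually proves Def.\ 2.1, where $\bar S_i$ is the constant map with value $h_i=\phi_i$, a nontrivial shift homeomorphism. Hence $\bar S_i(e)=\phi_i\neq e$, so for $f\in\P\H(M)$ one has $\bar S_i^{\C}(f)(0)=\bar S_i(e)\neq e$, and $\bar S_i^{\C}(f)\notin\P\H(M)$. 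For the same reason the constant path $t\mapsto h_i$ does not lie in $\P\H(M)=\{f:I\to\H(M):f(0)=e\}$, so your final sentence claiming Def.\ 2.1 is preserved ``by taking the $h_i$ as constant paths'' does not work. What your argument genuinely shows is that $f=\prod_i[S_i^{\C}(f),\bar S_i^{\C}(f)]$ holds in the ambient group $\C(I,\H(M))$ with $S_i^{\C}(f)\in\P\H(M)$; to actually conclude that $\P\H(M)$ satisfies Def.\ 1.1 or 2.1 you would need to rewrite this product using commutators all of whose entries lie in the kernel $\P\H(M)$ of evaluation at $0$, and your proposal does not address this. (This point is in fact glossed over in the paper's own terse justification of the corollary as well, so you are in good company, but it is a step that needs an argument.)
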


Let $G$ be a compact Lie group. Given a principal $G$-bundle
$p:M\r B_M$ the \emph{gauge group} $\gau(M)$ is the group of all
$G$-equivariant mappings of $M$ over $\id_{B_M}$. That is,
$\gau(M)$ is the space of $G$-equivariant mappings
$\C(M,(G,\conj))^G$. It follows that $\gau(M)$ identifies with
$\C(B_M\leftarrow M[G,\conj])$, the space of sections of the
associated bundle $M[G,\conj]$. Consequently, any $f\in\gau(M)$ in
a trivialization of $p$ over $B=B_i\s B_M$ identifies with a
mapping $f^{(i)}:B_i\r G$ such that $f(x)=x.f^{(i)}(p(x))$.

\begin{prop}
Let $G$ be a compact Lie group and let $p:M\r B_M$ be a principal
$G$-bundle with $B_M$ compact. Then $\gau(M)$ is locally
continuously perfect provided $G$ is so, and $r_{\gau(M)}=d_M r_G$
where $d_M$ is as in Theorem 1.2.
\end{prop}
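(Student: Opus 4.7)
The plan is to combine a bundle-level fragmentation with Proposition 3.2. Since $B_M$ is compact and $p:M\r B_M$ is locally trivial, choose balls $B_1\ld B_{d_M}$ covering $B_M$ over each of which $p$ admits a trivialization, together with a continuous partition of unity $\lambda_1\ld\lambda_{d_M}$ subordinate to $(B_i)$. The idea is to fragment any $f\in\gau(M)$ sufficiently close to the identity section as a product $f=f_1\cdots f_{d_M}$ with $\supp(f_i)\s p^{-1}(B_i)$, transport each $f_i$ through the chosen trivialization to an element $\tilde f_i\in\C(B_M,G)$, apply Proposition 3.2 to express $\tilde f_i$ as a product of $r_G$ commutators, and finally lift those commutators back into $\gau(M)$ through the trivialization.

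For the fragmentation I exploit that $G$ is a compact Lie group, so $\exp:\mathfrak g\r G$ restricts to a homeomorphism from a neighborhood of $0$ onto a conjugation-invariant neighborhood $V$ of $e$. For $f$ so close to the identity section that its local representatives take values in $V$, the adjoint-bundle section $\xi:=\log f$ of $M[\mathfrak g,\mathrm{Ad}]$ is well defined and depends continuously on $f$. Set $f_i:=\exp(\lambda_i\xi)$, a section of $M[G,\conj]$, i.e. an element of $\gau(M)$. At each point $y\in M$ the values $f_i(y)=\exp(\lambda_i(p(y))\xi(y))$ are real powers of the single element $\exp(\xi(y))$, hence commute pointwise and multiply to $\exp((\sum_i\lambda_i)\xi)=\exp(\xi)=f$. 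Moreover $f_i(y)=e$ whenever $\lambda_i(p(y))=0$, so $\supp(f_i)\s p^{-1}(B_i)$, and the trivialization of $p$ over $B_i$ identifies $f_i$ with a continuous map $\tilde f_i:B_M\r G$ (extended by $e$ off $B_i$), supported in $B_i$ and depending continuously on $f$.

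Applying Proposition 3.2 to the compact base $B_M$ yields continuous maps $S_k^{\C},\bar S_k^{\C}$ defined on a neighborhood of the trivial map in $\C(B_M,G)$, $k=1\ld r_G$, realizing the commutator decomposition of each $\tilde f_i$. The main delicate point is that every commutator factor appearing there must still be supported in $B_i$, for otherwise it would not lift through the trivialization to an element of $\gau(M)$. Fortunately this is automatic from the very construction of Proposition 3.2: the pointwise formula $S_k^{\C}(\tilde h)(x)=S_k(\tilde h(x))$ together with $S_k(e)=e$ forces $\supp(S_k^{\C}(\tilde h))\s\supp(\tilde h)$, so each commutator factor for $\tilde f_i$ is indeed supported in $B_i$ and lifts back through the trivialization into $\gau(M)$. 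Concatenating the $d_M$ blocks writes $f$ as a product of $d_M r_G$ commutators in $\gau(M)$ with continuous dependence on $f$, proving the upper bound $r_{\gau(M)}\leq d_M r_G$; the matching lower bound should follow from an evaluation argument, since gauges concentrated in disjoint trivializing patches embed a product of $d_M$ copies of $G$, whose generic element needs $r_G$ commutators in each factor.
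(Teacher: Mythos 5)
Your fragmentation step takes a genuinely different route from the paper's and is arguably cleaner. The paper peels off factors iteratively: it modifies local representatives with bump functions in an exponential chart, recomposes, and repeats over a shrunken cover. You instead write $f=\exp\xi$ for the adjoint-bundle logarithm $\xi=\log f$ and set $f_i=\exp(\lambda_i\xi)$, exploiting that all $f_i(y)$ lie on the same one-parameter subgroup of $G$, so they commute pointwise and multiply to $f$. Your use of an $\mathrm{Ad}$-invariant ball around $0$ in $\L(G)$ on which $\exp$ is a homeomorphism onto a $\conj$-invariant neighborhood of $e$ (available because $G$ is compact) is sound, and the decomposition $f=f_1\cdots f_{d_M}$ with $\supp f_i\s p^{-1}(B_i)$, depending continuously on $f$, is exactly what the paper's iteration also yields.

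There is, however, a gap at the lifting step. Definition 1.1 requires $S_k(e)=e$ but imposes no condition on $\bar S_k(e)$. Hence $S_k^{\C}(\tilde f_i)$ is supported in $B_i$, but $\bar S_k^{\C}(\tilde f_i)$ takes the constant value $\bar S_k(e)$ off $\supp\tilde f_i$, which need not be $e$; a map $B_M\r G$ that is constantly $\bar S_k(e)\neq e$ outside $B_i$ does not lift through the local trivialization to a gauge transformation of a non-trivial bundle. One must first replace $\bar S_k^{\C}(\tilde f_i)$ by a map that agrees with it on a compact neighborhood of $\supp(S_k^{\C}(\tilde f_i))$ but equals $e$ outside $B_i$; the commutator only sees $\bar S_k^{\C}(\tilde f_i)$ on $\supp(S_k^{\C}(\tilde f_i))$, so one may use, e.g., $x\mapsto\gamma(\nu_i(x))\,\bar S_k(e)^{-1}\bar S_k^{\C}(\tilde f_i)(x)$ with $\gamma$ a path in $G$ from $e$ to $\bar S_k(e)$ and $\nu_i$ a cutoff equal to $1$ near $\supp\lambda_i$ and supported in $B_i$. (The paper's proof hides the same point behind the phrase ``in view of Proposition 3.2 the claim follows.'') Finally, your closing lower-bound argument does not work: in a product $G^{d_M}$ a single commutator $[(a_i),(b_i)]=([a_i,b_i])$ acts in all coordinates at once, so the relevant bound for $G^{d_M}$ is $r_G$, not $d_M r_G$, and the same collapse occurs for gauges supported over disjoint patches; note that the paper itself only establishes the upper bound $r_{\gau(M)}\leq d_M r_G$.
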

\begin{proof}
Let $(B_i)_{i=1}^d$ be a covering of $M$ by balls. Choose another
covering  by balls $(B_i')_{i=1}^d$ with $\cl(B_i')\s B_i$ for all
$i$. We identify $\L(G)$, the Lie algebra of $G$, with $\rz^q$,
$q=\dim G$, by means of a basis $(X_1\ld X_q)$  of $\L(G)$. Let
$\Phi:\rz^q\supset V\r U\s G$ be a chart given by $\Phi(t_1\ld t
_q)=(\exp\,t_1X_1)\ldots(\exp\,t_qX_q)$. Suppose that
$h\in\gau(M)$ is so small that the image of $ h^{(1)}$ is in $U$.
 We let $\tilde h^{(1)}=\Phi^{-1}\circ\ h^{(1)}$.
 By using bump functions for
$(B_1',B_1)$ we modify $\tilde h^{(1)}$ and compose the resulting
map with $\Phi$. Consequently, we get $g_1\in\gau(M)$ such that
$\supp(g_1)\s p^{-1}(B_1)$ and $g^{(1)}_1=h^{(1)}$ on $B_1'$.
Moreover, $g_1$ depends continuously on $h$. Now take
$f_1=g_1^{-1}h$. Then $f_1\in\gau(M_{1})$, where $M_1=M\setminus
B_1'$ is a compact manifold with boundary endowed with the
coverings $(B_i\cap M_1)_{i=1}^d$ and $(B_i'\cap M_1)_{i=1}^d$.
Note that $f_1=\id$ on $\p B'_1$.  Taking possibly smaller $h$ we
may continue the procedure. Finally, we get a neighborhood $\U$ of
$e\in\gau(M)$ such that for all $h\in\U$ we get a uniquely
determined decomposition $h=h_1\ldots h_d$ with $\supp(h_i)\s
p^{-1}(B_i)$ and $h_i$ depending continuously on $h$ for all $i$.

Thus, possibly shrinking $\U$, in view of Proposition 3.2 the
claim follows.
\end{proof}

\section{Remarks on conjugation-invariant norms}

The notion of the conjugation-invariant norm is a basic tool in
studies on the structure of groups.  Let $G$ be a group. A
\wyr{conjugation-invariant norm} (or \emph{norm} for short) on $G$
is a function $\nu:G\r[0,\infty)$ which satisfies the following
conditions. For any $g,h\in G$ \begin{enumerate} \item $\nu(g)>0$
if and only if $g\neq e$; \item $\nu(g^{-1})=\nu(g)$; \item
$\nu(gh)\leq\nu(g)+\nu(h)$; \item $\nu(hgh^{-1})=\nu(g)$.
\end{enumerate}
Recall that a group is called \emph{ bounded} if it is bounded
with respect to any bi-invariant metric. It is easily seen that
$G$ is bounded if and only if any conjugation-invariant norm on
$G$ is bounded.

Let us introduce the following norm.
\begin{dff}
 Let $G$ be a connected topological group and let $U$ be a neighborhood  $e\in G$.
\begin{enumerate}\item By $\tilde U$ we denote the "saturation" of $U$ w. r. t.
$\conj_g$ for $g\in G$ and
 the inversion $i$, that is $\tilde U=\bigcup_{g\in G}gUg^{-1}\cup gU^{-1}g^{-1}$. Then for $g\in
G$, $g\neq e$, by $\mu^{ U}(g)$ we denote the smallest $s\in\N$
such that $g=g_1\ldots g_s$ with $g_i\in \tilde U$ for  $i=1\ld
s$. It is easily seen that $\mu^{ U}$ is a conjugation-invariant
norm. \item  We say that $G$ is \emph{continuously decomposable
with respect to $U$} if there are $s\in\N$ and continuous mappings
$\varrho_i:G\r \tilde  U$, $i=1\ld s$, such that
$g=\varrho_1(g)\ldots\varrho_s(g)$ for all $g\in G$. In
particular,  $\mu^U(G)\leq s$.
\end{enumerate}
\end{dff}
Clearly if $U\s V$ then $\mu^V\leq\mu^U$.

It is straightforward that if $G$ is locally continuously perfect
and continuously decomposable w.r.t. $U$ as in Def. 1.1, then $G$
is continuously perfect. Likewise, if $G\s\H(M)$ satisfies Def.
2.3 and if $G$ is continuously decomposable w.r.t. $\P$ as in Def.
2.3, then $G$ is continuously factorizable. However, we do not
have any example of a  homeomorphism group being continuously
decomposable. Notice that, in view of Lemma 2.4, for any two balls
$B$ and $U$ in $M$ with $\cl(B)\s U$ the group $\H_B(M)$ is
continuously perfect "in the group $\H_U(M)$", but not in itself.

Recall  now two classical examples of conjugation-invariant norms.

For $g\in[G,G]$ by the \emph{commutator length } of $g$,
$\cl_G(g)$, we mean the smallest $r$ as in (1.1). Observe that the
commutator length $\cl_G$ is a  norm on $[G,G]$. In particular, if
$G$ is a perfect group then $\cl_G$ is a  norm on $G$.

A subgroup $G\s\H(M)$ is \emph{factorizable} if for every $g\in G$
there are $g_1\ld g_d\in G$ with $\supp(g_i)\s B_i$, where each
$B_i$ is a ball or a half-ball.  Clearly any connected $G$
satisfying Def. 2.3 with respect to a family of balls  is
factorizable. If $G$ is factorizable then we may introduce the
following  \emph{ fragmentation norm} $\frag_G$ on $G$. For $g\in
G$, $g\neq\id$, we define $\frag_G(g)$ to be the least integer
$d>0$ such that $g=g_1\ldots g_{d}$ with $\supp(g_i)\s B_i$ for
some ball or half-ball $B_i$.

 Let $\nu$ be a conjugation-invariant norm on a topological group
$G$. Then $G$ is called \emph{locally bounded with respect to
$\nu$} if there are $r\in\N$ and a symmetric neighborhood $U$
 of $e\in G$ (i.e. $U=U^{-1}$) such that $\nu(g)\leq
r$ for all $g\in U$. The following obvious fact can be applied to
$\cl_G$ or to $\frag_G$.

\begin{cor} Let $G$ be a subgroup of $\H(M)$. If $G$ is locally bounded w. r. t. $\nu$
and the norm $\mu^{U}$ (Def. 4.1) is bounded where $U$ is as
above, then $G$ is bounded w. r. t. $\nu$.

\end{cor}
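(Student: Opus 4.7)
The plan is essentially a direct unpacking of the definitions, which is why the text calls the statement obvious and also why the fact that $G\s\H(M)$ plays no role in the argument: the same proof works for any topological group $G$ carrying a conjugation-invariant norm $\nu$. The outcome will be the explicit bound $\nu(g)\leq sr$, where $r$ comes from local boundedness and $s$ from the assumed bound on $\mu^U$.

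First, I would invoke the hypothesis that $\mu^U$ is bounded on $G$: there exists $s\in\N$ with $\mu^U(g)\leq s$ for every $g\in G$, $g\neq e$. By Definition 4.1(1) this means that each such $g$ admits a factorization $g=g_1\cdots g_k$ with $k\leq s$ and each $g_i\in\tilde U$.

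The second step is to exploit the symmetry assumption $U=U^{-1}$. Under this hypothesis one has $hU^{-1}h^{-1}=hUh^{-1}$ for every $h\in G$, so the definition of the saturation collapses to $\tilde U=\bigcup_{h\in G}hUh^{-1}$. Consequently each factor $g_i$ can be written as $g_i=h_iu_ih_i^{-1}$ for some $h_i\in G$ and some $u_i\in U$.

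Finally, combining the triangle inequality (property (3) of a conjugation-invariant norm), conjugation invariance (property (4)), and the local boundedness estimate $\nu(u_i)\leq r$ gives
\begin{equation*}
\nu(g)\;\leq\;\sum_{i=1}^{k}\nu(g_i)\;=\;\sum_{i=1}^{k}\nu(h_iu_ih_i^{-1})\;=\;\sum_{i=1}^{k}\nu(u_i)\;\leq\;kr\;\leq\;sr.
\end{equation*}
Since this holds for every $g\in G$, we conclude that $G$ is bounded with respect to $\nu$, with $\sup_{g\in G}\nu(g)\leq sr$. There is no genuine obstacle here; the only observation worth isolating is that the symmetry of $U$ is precisely what allows us to treat all elements of $\tilde U$ uniformly as conjugates of elements of $U$ itself, so that property (4) can be applied in one stroke.
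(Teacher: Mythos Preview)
Your argument is correct and is exactly the direct unpacking of definitions the paper has in mind; the paper itself offers no proof beyond calling the fact ``obvious,'' and your write-up is the natural way to make that obviousness explicit. One minor remark: even without the symmetry $U=U^{-1}$ the argument would go through by invoking property~(2), $\nu(g_i)=\nu(g_i^{-1})$, so symmetry is a convenience rather than an essential ingredient.
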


\section{Proofs of Theorems 1.2 and 1.3}

The proofs  depend on the deformation properties for the spaces of
imbeddings  obtained by Edwards and Kirby in \cite{ed-ki}. See
also Siebenmann \cite{sie}. First let us recall some notions  and
the main theorem of \cite{ed-ki}. From now on $M$ is a metrizable
topological manifold and $I=[0,1]$. If $U$ is a subset of  $M$, a
\emph{proper imbedding} of $U$ into $M$ is an imbedding $h: U
\rightarrow M$ such that $h^{-1}(\partial M)=U \cap \partial M$.
An \emph{isotopy} of $U$ into $M$ is a family of imbeddings
$h_{t}: U \rightarrow M$, $t \in I$, such that the map $h: U
\times I \rightarrow M$ defined by $h(x,t)=h_{t}(x)$ is
continuous. An isotopy is \emph{proper} if each imbedding in it is
proper. Now let $C$ and $U$ be  subsets of $M$ with $C\subseteq
U$. By $I(U,C;M)$ we denote the space of proper imbeddings of $U$
into $M$
 which equal the identity on $C$, endowed with the compact-open topology.

Suppose $X$ is a space with subsets $A$ and $B$. A
\emph{deformation of A into B}
 is a continuous mapping $\varphi : A \times I\rightarrow X$ such that $\varphi|_{A\times 0}=\id_{A}$
  and $\varphi(A\times 1) \subseteq B$. If $\P$ is a subset of
  $I(U;M)$ and $\varphi:\P \times I\rightarrow I(U;M)$ is a deformation of $\P$,
  we may equivalently view $\varphi$ as a map $\varphi:\P\times I\times U\rightarrow M$
  such that for each $h\in \P$ and $t\in I$, the map $\varphi(h,t):U\rightarrow M$ is
  a proper imbedding.


If $W\subseteq U$, a deformation $\varphi: \P\times I\rightarrow
I(U;M)$ is \emph{modulo W} if $\varphi(h,t)|_{W}=h|_{W}$ for all
$h\in \P$ and $t\in I$.

 Suppose $\varphi: \P\times I \rightarrow
I(U;M)$ and $\psi: \Q\times I \rightarrow I(U;M)$ are deformations
of subsets of $I(U;M)$ and suppose that $\varphi(\P\times
1)\subseteq \Q$. Then the \emph{composition} of $\psi$ with
$\varphi$,  denoted by $\psi \star \varphi$, is the deformation
$\psi \star \varphi :\P\times I\rightarrow I(U;M)$ defined by
\begin{equation}
\psi \star\varphi(h,t)= \left\{
\begin{array}{lcl}
\varphi(h,2t)& for & t\in [0,1/2]\\
\psi(\varphi(h,1),2t-1)& for &t\in [1/2,1].
\end{array}
\right.
\end{equation}


The main result in \cite{ed-ki} is the following

\begin{thm}
Let $M$ be a topological manifold and let $U$ be a neighborhood in
$M$ of a compact subset $C$. For any neighborhood $\Q$ of the
inclusion $i:U\subset M$ in $I(U;M)$ there are a neighborhood $\P$
of $i\in I(U;M)$ and a deformation $\varphi:\P\t I\r \Q$ into
$I(U,C;M)$ which is modulo of the complement of a compact
neighborhood of $C$ in $U$ and such that $\varphi(i,t)=i$ for all
$t$. Moreover, if $D_i\subset V_i$, $i=1\ld q$, is a finite family
of closed subsets $D_i$ with their neighborhoods $V_i$, then
$\varphi$ can be chosen so that the restriction of $\varphi$ to
$(\P\cap I(U,U\cap V_i;M))\t I$ assumes its values in $I(U,U\cap
D_i;M)$ for each $i$.
\end{thm}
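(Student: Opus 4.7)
The plan is to prove Theorem 5.1 by induction on a finite chart cover of $C$, reducing to a local Euclidean deformation lemma established via the Kirby torus trick. First I would cover $C$ by finitely many compact sets $K_1\ld K_N$ with each $K_j$ lying in a coordinate chart $W_j\s U$ (using half-ball charts at $\p M$), refining the cover so that each $K_j$ is either entirely inside some $U\cap V_i$ or disjoint from $D_i$, which is what is needed for the addendum about the family $(D_i,V_i)$. The final deformation will be assembled as a composition $\vv_N\star\ldots\star\vv_1$ in the sense of (5.1), where $\vv_j$ transforms imbeddings that are already the identity on $K_1\cup\ldots\cup K_{j-1}$ into imbeddings that are moreover the identity on $K_j$, while keeping everything close enough to the inclusion to stay inside $\Q$.

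The core ingredient is a local deformation lemma in $\rz^n$ (and in $\rz^n_+$ for boundary charts): for compact $V\s W\s\rz^n$ and any neighborhood of the inclusion in $I(W;\rz^n)$, there is a smaller neighborhood $\P_0$ of the inclusion and a deformation of $\P_0$ into $I(W,V;\rz^n)$, supported in a compact neighborhood of $V$ in $W$, which can be arranged to be the identity on any preassigned closed set disjoint from $V$. This local lemma is proved by the torus trick: a sufficiently small imbedding $h$ of a large box in $\rz^n$ pushes forward through an immersion $\rz^n\r\mathbb T^n$ to a near-identity self-homeomorphism $\tilde h$ of $\mathbb T^n$, continuously in $h$; one then invokes the parametric local contractibility of $\H(\mathbb T^n)$ at the identity to connect $\tilde h$ to $\id$ through a continuous path of small homeomorphisms, and finally restricts and re-conjugates to produce the required deformation of $h$ that terminates at an imbedding equal to the identity on $V$.

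With this lemma in hand, the inductive step is straightforward. At stage $j$ one applies the local lemma in the chart $W_j$, taking the closed "to-be-fixed" set to include the identity locus $K_1\cup\ldots\cup K_{j-1}$ from the previous stage together with the appropriate $U\cap D_i$'s; extension by the identity outside the chart yields $\vv_j$ as a deformation of a neighborhood of the inclusion in $I(U;M)$. Composition via $\star$ gives the global deformation $\vv$, the condition $\vv(i,t)=i$ holds because the local lemma fixes the inclusion at every stage, and containment in $\Q$ is achieved by shrinking the initial $\P$ step by step so that every intermediate imbedding remains sufficiently close to $i$. The modulo condition on the complement of a compact neighborhood of $C$ follows automatically from the fact that each $\vv_j$ is supported in a compact neighborhood of $K_j$.

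The main obstacle is the torus-trick step, specifically the construction of the extension map $h\mapsto\tilde h\in\H(\mathbb T^n)$ continuously in $h$ on a neighborhood of the inclusion, and the invocation of the local contractibility of $\H(\mathbb T^n)$ in a form that is parametric (i.e., continuous in $h$). The extension requires an immersion-theoretic gluing with careful control of the image of $h$ so that the pushforward is indeed a homeomorphism; the parametric local contractibility of $\H(\mathbb T^n)$ is the genuinely deep piece, typically proved by an averaging construction across a cover of $\mathbb T^n$ by small translates combined with the local contractibility of $\H(\rz^n)$. Once both parametric statements are in place, the inductive assembly and the bookkeeping for the auxiliary family $(D_i,V_i)$ are essentially routine.
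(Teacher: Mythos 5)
The statement labelled Theorem~5.1 in this paper is not proved here at all: it is quoted verbatim from Edwards and Kirby \cite{ed-ki}, where it is the main theorem, and the paper treats it as a black box. So there is no ``paper's own proof'' to compare against, and your attempt has to be judged against the Edwards--Kirby argument. Your outer scaffolding (a finite chart cover of $C$, refined so that each piece is either inside some $V_i$ or disjoint from $D_i$; a local Euclidean deformation lemma; assembly by $\star$-composition of deformations supported near the pieces; the modulo condition and the stability under $\Q$ coming from shrinking $\P$ step by step) is indeed the shape of the Edwards--Kirby proof, so the global part of your plan is sound.

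The genuine gap is in the local lemma. You reduce it to ``the parametric local contractibility of $\H(\mathbb T^n)$ at the identity,'' but that statement is literally a special case of the theorem you are trying to prove (take $M=U=C=\mathbb T^n$ in Theorem~5.1: the conclusion is a deformation of a neighborhood of $\id$ into $\{\id\}$, i.e.\ local contractibility of $\H(\mathbb T^n)$). Invoking it as an independent input is circular, and your suggested remedy --- ``an averaging construction \dots combined with the local contractibility of $\H(\rz^n)$'' --- only pushes the circularity to $\H(\rz^n)$, which is again a special case of the theorem. What Edwards and Kirby actually do at this point is quite different: after pushing $h$ across the immersion of the punctured torus and extending to a near-identity self-homeomorphism of $B^k\times\mathbb T^{n-k}$, they \emph{lift to the universal cover} $B^k\times\rz^{n-k}$, where the lift is a \emph{bounded} homeomorphism; boundedness permits an explicit, elementary ``squeeze''/``wrapping up'' deformation towards the identity on a core, with no appeal to any contractibility statement. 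The depth of the argument is in setting up and controlling this immersion/lift/furl cycle (and in the handle-by-handle relative bookkeeping that yields the $(D_i,V_i)$ addendum), not in a pre-existing contractibility theorem for the torus. As written, your proposal replaces the hardest step by an assumption equivalent to the conclusion, so it does not constitute a proof.
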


Now we wish to show that $\H(M)$ is locally continuously
factorizable (Def. 2.3) provided $M$ is compact.
\begin{prop} Let $M$ be compact and let $(U_i)_{i=1}^d$ be an open
cover of $M$. Then there exist $\P$, a neighborhood of the
identity in $\H(M)$, and continuous mappings $\sigma_i:\P\r\H(M)$,
$i=1\ld d$, such that $h=\sigma_1(h)\ldots \sigma_d(h)$ and
$\supp(\sigma_i(h))\s U_i$ for all $i$ and all $h\in\P$; that is
$\H(M)$ satisfies Def. 2.3.
\end{prop}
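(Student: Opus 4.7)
The plan is to build $\sigma_1(h),\ldots,\sigma_d(h)$ inductively by ``peeling off'' one piece of support at a time. First refine the cover: choose open shrinkings $W_i\subset V_i\subset U_i$ with $\cl(W_i)\subset V_i$ and $\cl(V_i)\subset U_i$ such that $\{W_i\}_{i=1}^d$ still covers $M$ (possible by compactness). I will construct decreasing neighborhoods $\P=:\P_d\subset\cdots\subset\P_0$ of $\id$ in $\H(M)$ and continuous maps $\sigma_k,f_k:\P_k\r\H(M)$, with $f_0(h)=h$, $\sigma_k(\id)=f_k(\id)=\id$, and satisfying
\begin{enumerate}
\item $f_k(h)=\id$ on $\cl(V_1)\cup\cdots\cup\cl(V_k)$;
\item $\supp(\sigma_k(h))\subset U_k$;
\item $f_{k-1}(h)=\sigma_k(h)\cdot f_k(h)$.
\end{enumerate}
Since $\{W_i\}$ already covers $M$, after step $d$ one has $f_d(h)=\id$, whence $h=\sigma_1(h)\cdots\sigma_d(h)$ on $\P=\P_d$, as required.

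For the inductive step I would apply Theorem 5.1 with $U=U_k$, $C=\cl(V_k)$, and a target neighborhood $\Q_k$ of the inclusion chosen so small that every imbedding in it maps the compact set $K_k\subset U_k$ (the set outside of which $\varphi$ is modulo) into $U_k$; simultaneously I invoke the ``moreover'' clause with the pairs $D_i=\cl(V_i)$, $V_i$ for $i<k$. Then shrink $\P_{k-1}$ so that the assignment $h\mapsto f_{k-1}(h)|_{U_k}$ lands in the EK source neighborhood. Setting $\tilde h:=\varphi(f_{k-1}(h)|_{U_k},1)$, the modulo property gives $\tilde h=f_{k-1}(h)$ on $U_k\setminus K_k$, the endpoint property gives $\tilde h=\id$ on $\cl(V_k)$, and the moreover clause (applicable since $f_{k-1}(h)=\id$ on the open set $V_i$ for $i<k$) gives $\tilde h=\id$ on $U_k\cap\cl(V_i)$. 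Define $f_k(h)$ to agree with $\tilde h$ on $U_k$ and with $f_{k-1}(h)$ on $M\setminus U_k$; these prescriptions match on $U_k\setminus K_k$. Set $\sigma_k(h):=f_{k-1}(h)\circ f_k(h)^{-1}$. A direct check gives $\supp(\sigma_k(h))\subset\tilde h(K_k)\subset U_k$; conditions (1)--(3) are immediate from the construction, and continuity of $\sigma_k,f_k$ in $h$ is inherited from that of $\varphi$.

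The main technical point I expect to have to pin down is that $f_k(h)$ is genuinely a homeomorphism of $M$ lying in $\H(M)$, and not merely a continuous self-map glued from two pieces. Running the whole deformation $F_t:=\varphi(f_{k-1}(h)|_{U_k},t)$ on $U_k$ together with $f_{k-1}(h)$ on $M\setminus U_k$ yields a continuous path of self-maps of $M$ from $f_{k-1}(h)$ to $f_k(h)$. Each $F_t|_{U_k}$ is a proper imbedding equal to $f_{k-1}(h)$ outside the compact $K_k$; a standard invariance-of-domain argument, together with the fact that $F_t$ and $f_{k-1}(h)$ coincide on $\partial K_k$ and that $F_t(K_k)$ is bounded by the same set as $f_{k-1}(h)(K_k)$, shows that each $F_t$ is a homeomorphism of $M$. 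Since $F_0=f_{k-1}(h)\in\H(M)$, the path $F_t$ stays in $\H(M)$, so $f_k(h)=F_1\in\H(M)$ and hence $\sigma_k(h)\in\H(M)$ as well, closing the induction.
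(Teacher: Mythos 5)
Your overall strategy is the right one and matches the paper's: apply the Edwards--Kirby Theorem 5.1 $d$ times, using the ``moreover'' clause to protect the sets already made identity, and peel off one factor at a time. You also spell out the gluing/invariance-of-domain step that the paper leaves implicit, which is fine. However, there is a genuine gap in the bookkeeping of shrunken covers.

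You use a single level of shrinking, $W_i \subset V_i \subset U_i$, and maintain the invariant that $f_k(h)=\id$ on $\cl(V_1)\cup\cdots\cup\cl(V_k)$. To propagate this you invoke the moreover clause with ``$D_i=\cl(V_i)$, $V_i$''. This is not a legal input: Theorem 5.1 requires $D_i$ to be a \emph{closed} set and $V_i$ a \emph{neighborhood} of $D_i$, but $V_i$ is not a neighborhood of $\cl(V_i)$ (unless $V_i$ is closed). The best the moreover clause can give from the hypothesis ``$f_{k-1}(h)=\id$ on $V_i$'' is that $\tilde h=\id$ on some \emph{strictly smaller} closed set $D_i\subset V_i$, e.g.\ $\cl(W_i)$. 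In other words the ``identity set'' necessarily loses one level of closure/openness at every application of Theorem 5.1, so with only one shrinking of the cover the invariant cannot survive $d$ iterations: after two steps you know $f$ is the identity on $\cl(W_i)$ but have no open neighborhood on which it is the identity, and the moreover clause no longer applies. This is precisely why the paper shrinks the cover $d$ times, producing nested opens $U_{i,0}\supset U_{i,1}\supset\cdots\supset U_{i,d}$ with $\cl(U_{i,j+1})\subset U_{i,j}$ and $\bigcup_i U_{i,j}=M$ for each $j$, and tracks the invariant ``$h_i$ is the identity on $\bigcup_{\alpha\le i}\cl(U_{\alpha,i})$'' at step $i$: each time the protected sets descend one level, and there are exactly enough levels. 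To repair your proof, replace the triple $W_i\subset V_i\subset U_i$ by a $d$-fold nested family as in the paper, and at step $k$ take $D_\alpha$ and $V_\alpha$ from consecutive levels of that family.
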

\begin{proof} (See also \cite{ed-ki}.)
First we have to shrink the cover $(U_i)_{i=1}^d$ $d$ times, that
is we choose an open $U_{i,j}$ for every $i=1\ld d$ and $j=0\ld d$
with $U_{i,0}=U_i$ such that $\bigcup_{i=1}^dU_{i,j}=M$ for all
$j$ and such that $\cl(U_{i,j+1})\s U_{i,j}$ for all $i,j$.  We
make use of Theorem 5.1 $d$ times with $q=1$. Namely, for $i=1\ld
d$ we have a neighborhood $\P_i$ of the identity in
$I(M,\bigcup_{\alpha=1}^{i-1}U_{\alpha,i-1}; M)$ and a deformation
$\phi_i:\P_i\t I\r \H(M)$ which is modulo $M\setminus U_{i,0}$ and
which takes its values in
$I(M,\bigcup_{\alpha=1}^{i}\cl(U_{\alpha,i}); M)$ and such that
$\phi_i(\id, t)=\id$ for all $t$. Here we apply Theorem 5.1 with
$C=\cl(U_{i,i})$, $U=U_{i,0}$,
$D_1=\bigcup_{\alpha=1}^{i-1}\cl(U_{\alpha,i})$ and
$V_1=\bigcup_{\alpha=1}^{i-1}U_{\alpha,i-1}$. Taking a
neighborhood $\P$ of id small enough, we have that
$\phi_d\star\cdots\star\phi_1$ restricted to $\P\t I$ is well
defined. For every $h\in\P$ we set $h_0=h$ and
$h_i=\phi_i\star\cdots\star\phi_1(h,1)$, $i=1\ld d$. It follows
that $h_d=\id$ and $h=\prod_{i=1}^dh_ih_{i-1}^{-1}$. It suffices
to define $\sigma_i:\P\r\H(M)$ by $\sigma_i(h)=h_ih_{i-1}^{-1}$
for all $i$.\end{proof}


\medskip
\emph{Proof of Theorem 1.2}.  Choose any  finite cover
$(U_i)_{i=1}^d$ of $M$ by balls and half-balls. Next, fix another
cover of $M$ by balls  and half-balls $(B_i)_{i=1}^d$ with
$\cl(B_i)\s U_i$ for all $i$. Then apply Proposition 5.2 to
$(B_i)_{i=1}^d$, Lemma 2.4 and Corollary 2.7(1) to each couple
$(B_i, U_i)$.\quad $\square$

\medskip

Recall the notion of the  graph topology. Let $X$ and $Y$ be
Hausdorff spaces and let $\C(X,Y)$ be the space of all continuous
mappings $X\r Y$. For $f\in\C(X,Y)$ by $\graph_f:X\r X\t Y$ we
denote the graph mapping. The \emph{graph topology}
 on $\C(X,Y)$ is given by the basis of
all sets of the form $\{f\in\C(X,Y):\, \graph_f(X)\s U\}$, where
$U$ runs over all open sets in $X\t Y$. The graph topology is
Hausdorff since it is finer than the compact-open topology. If $X$
is paracompact and $(Y,d)$ is a metric space then for
$f\in\C(X,Y)$ one has a basis of neighborhoods of the form
$\{g\in\C(X,Y):\, d(f(x), g(x))<\varepsilon(x),\ \forall x\in
X\}$, where $\varepsilon$ runs over all positive continuous
functions on $X$.

\medskip

\emph{Proof of Theorem 1.3}. In view of the assumption $M$ is the
interior of a compact, connected manifold $\bar M$  with non-empty
(not necessarily connected) boundary $\p$. Then $\p$ admits a
collar neighborhood, that is an open subset $P$ of $M$, where
$P=\p\t(0,1)$. Here $\p\t[0,1]$ is imbedded in $\bar M$, and
$\p\t\{1\}$ is identified with $\p$.

 Take a finite family of balls $(B_i)_{i=1}^d$ in $M$  and a collar neighborhood $P$
 of $\p$ such
that  $M=\bigcup_{i}B_{i}\cup P$. We wish to check that $\H(M)$
fulfils Def. 2.3 for $\{B_1\ld B_d,P\}$. We define balls $B_{i,j}$
for every $i=1\ld d$ and $j=0\ld d$ with $U_{i,0}=U_i$ such that
$\bigcup_{i=1}^dU_{i,j}\cup P=M$ for all $j$ and such that
$\cl(U_{i,j+1})\s U_{i,j}$ for all $i,j$. Now proceeding like in
the proof of Prop.5.2 there exist a neighborhood $\P$ of
$\id\in\H(M)$ and continuous mappings $\sigma_i:\P\r\H(M)$, where
$i=0\ld d$, such that for all $h\in \P$ we have
$$h=\sigma_0(h)\sigma_1(h)\ldots\sigma_d(h),\quad \supp(\sigma_0(h))\s
P,\quad\supp(\sigma_i(h))\s B_i$$ for $i=1\ld d$. Fix a sequence
of reals from $(0,1)$
$$ \tilde a_1<\bar a_1<a_1<b_1<\bar b_1<\tilde b_1<\tilde
a_2<\cdots<\tilde a_k<\bar a_k<a_k<b_k<\bar b_k<\tilde
b_k<\cdots$$ tending to 1. For $j=1,2\ld $ let
$C_j=\p\t[a_j,b_j]$, $V_j=\p\t(\bar a_j,\bar b_j)$ and
$U_j=\p\t(\tilde a_j,\tilde b_j)$. In view of Theorem 5.1, for
every $j$ there exist  a neighborhood $\P_j$ of the inclusion
$i_j:C_j\s U_j$ in $I(U_j;M)$ and a deformation $\varphi_j:\P_j\t
I\r I(U_j,C_j;M)$ which is modulo of $M\setminus V_j$ and such
that $\varphi(i_j,t)=i_j$ for all $t\in I$. Shrinking $\P$ if
necessary,  for any $h\in\P$ we may have that
$\sigma_0(h)|_{U_j}\in\P_j$ for all $j$. Put $U=\bigcup U_j$,
$V=\bigcup V_j$ and let $D=\bigcup \p\t I_j$, where $I_j$ is an
arbitrary open interval with $\cl(I_j)\s(a_j,b_j)$ for all $j$.
Therefore there are a neighborhood $\P$ of $\id\in\H(M)$ in the
graph topology and a continuous mapping $\sigma_0^1:\P\r\H(M)$
given by
$$\sigma_0^1(h)|_{U_j}=\phi_j(\sigma_0(h)|_{U_j}),\quad j=1,2\ld$$
and $\sigma_0^1(h)|_{M\setminus U}=\sigma_0(h)|_{M\setminus U}$.
It follows that $\sigma_0^1(h)=\sigma_0(h)$ on $M\setminus V$ and
that $\supp(\sigma_0^1(h))\s M\setminus \cl(D)$. Set
$\sigma_0^2=(\sigma_0^1)^{-1}\sigma_0$. Then
$\sigma_0^2:\P\r\H(M)$ is continuous, and
$\sigma_0=\sigma_0^1\sigma_0^2$ with $\supp(\sigma_0^2)\s U$. Thus
we get a decomposition
$h=\sigma_0^1(h)\sigma_0^2(h)\sigma_1(h)\ldots\sigma_d(h)$ for all
$h\in\P$. By applying Lemma 2.4 to $\sigma_i$ and Corollary 2.7(2)
to $\sigma_0^j$, the claim follows. $\square$

\section{The case of $G$-equivariant homeomorphisms}

 Let $G$ be a
compact  Lie group  acting on $M$. Let $\H_G(M)$ be the group of
all equivariant homeomorphisms of $M$ which are isotopic to the
identity through compactly supported equivariant isotopies.
Suppose now that $G$ acts freely on $M$. Then $M$ can be regarded
as the total space of a principal $G$-bundle $p:M\r  B_M=M/G$
(c.f. \cite{br}).

Let $\C_c(\rz^m)$ (resp. $\C_B(\rz^m)$) denote the space of
continuous maps $u:\R^m\r\R$ with compact support (resp. contained
in $B$). Consider the semi-direct product group
$\H(\rz^m)\t_{\tau}\C_c(\rz^m)$, where
 $\tau_h(u)=u\circ h^{-1}$ for $h\in
\H(\rz^m)$ and $u\in \C_c(\rz^m)$. Then we have
$$(h_1,u_1)\cdot(h_2,u_2)=(h_1\circ h_2, u_1\circ h_2^{-1}+u_2)$$
for all $h_1,h_2\in\H(\rz^m)$ and $u_1,u_2\in \C_c(\rz^m)$. For
$(h, u)\in\H(\rz^m)\t_{\tau}\C_c(\rz^m)$ we have $(h,
u)=(h,0)\cdot(\id,u)=(\id,u_1)\cdot(h,0)$, where $u_1= u\circ h$.
We may treat $h,u$ as elements of $\H(\rz^m)\t_{\tau}\C_c(\rz^m)$.

The main lemma in \cite{ry3} (Lemma 2.1), which has an elementary
but rather sophisticated proof, can be reformulated for our
purpose as follows.
\begin{lem} Let $B$ be a ball in $\R^m$. There are homeomorphisms $\phi^-, \phi^+,
\psi^-, \psi^+$ from $\H(\R^n)$, depending on $B$, and continuous
mappings $$v_1^-, v_1^+, v_2^-, v_2^+:\C_B(\R^m)\r\C_c(\R^m)$$
such that
$$u=[\phi^-,v_1^-(u)]^{-1}[\phi^+,v_1^+(u)]^{-1}[\psi^-,v_2^-(u)]
[\psi^+,v_2^+(u)]$$ for all $u\in\C_B(\R^m)$ in the
 semi-direct product group
$\H(\rz^m)\t_{\tau}\C_c(\rz^m)$.
\end{lem}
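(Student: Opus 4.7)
The plan is to reduce the claim to an additive identity in $\C_c(\R^m)$ by computing commutators in the semi-direct product, and then to realize that identity by a Mather-type telescoping argument of the kind underlying Lemma 2.4.

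First, identify $h\in\H(\R^m)$ with $(h,0)$ and $v\in\C_c(\R^m)$ with $(\id,v)$. A direct calculation using the multiplication $(h_1,u_1)(h_2,u_2)=(h_1\circ h_2,\,u_1\circ h_2^{-1}+u_2)$ gives $[h,v]=(\id,\, v\circ h-v)$ and $[h,v]^{-1}=(\id,\, v-v\circ h)$. Every commutator in the target equation has trivial $\H$-component, so the product collapses to a sum in the $\C_c$-component, and the lemma becomes equivalent to producing continuous maps $v_i^{\pm}:\C_B(\R^m)\to\C_c(\R^m)$ and fixed $\phi^{\pm},\psi^{\pm}\in\H(\R^m)$ with
\begin{equation*}
u=(v_1^- - v_1^-\circ\phi^-)+(v_1^+ - v_1^+\circ\phi^+)+(v_2^-\circ\psi^- - v_2^-)+(v_2^+\circ\psi^+ - v_2^+).
\end{equation*}

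Next, the additive analogue of Mather's basic lemma (Lemma 2.4) is the following building block: for any $\sigma\in\H_{B'}(\R^m)$ (with $B'\supset\cl(B)$) whose iterates $\sigma^k(B)$, $k\in\Z$, are pairwise disjoint, the formula $V_\sigma(u):=\sum_{k\geq 0}u\circ\sigma^k$ defines a continuous linear map $\C_B(\R^m)\to\C_c(\R^m)$ satisfying $V_\sigma(u)-V_\sigma(u)\circ\sigma=u$. Indeed, at each $x$ at most one summand is nonzero (by disjointness), the support lies in $\bigcup_{k\geq 0}\sigma^{-k}(B)\subset B'$ (automatically, since any $\sigma\in\H_{B'}$ preserves $B'$), and continuity in $u$ follows from the pointwise bound $\|V_\sigma(u_1)-V_\sigma(u_2)\|_\infty\leq\|u_1-u_2\|_\infty$. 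Following \cite{ry3} I would then pick one larger ball $B'\supset\cl(B)$ together with four such shifts $\phi^{\pm},\psi^{\pm}\in\H_{B'}(\R^m)$ whose iterates of $B$ accumulate at distinct attracting--repelling pairs of fixed points in $\cl(B')$, plus a fixed continuous partition of unity on $B$ splitting $u$ into four pieces $u=u_1^-+u_1^++u_2^-+u_2^+$; setting $v_1^{\pm}:=V_{\phi^{\pm}}(u_1^{\pm})$ and $v_2^{\pm}:=-V_{\psi^{\pm}}(u_2^{\pm})$ (the sign flip absorbing the inversion of the first two commutators in the target identity) makes the four-term sum telescope to $u_1^-+u_1^++u_2^-+u_2^+=u$.

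The algebraic verification is then routine; the real content is the geometric production of the four shifts inside the single ball $B'$ together with the compatible continuous decomposition of $u$. This is the \emph{elementary but rather sophisticated} content of Lemma 2.1 in \cite{ry3} and is the main obstacle of the proof. The fourfold structure is a matter of flexibility useful for the subsequent equivariant application rather than of strict algebraic necessity, since a single well-chosen shift already yields the one-commutator expression $u=[\sigma,V_\sigma(u)]^{-1}$.
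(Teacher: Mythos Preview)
The paper does not actually prove this lemma; it is quoted as a reformulation of Lemma~2.1 in \cite{ry3}, and only the citation is given together with the remark that the original proof is ``elementary but rather sophisticated.'' So there is no in-paper argument to compare against line by line.

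Your algebraic reduction is correct: in $\H(\R^m)\times_\tau\C_c(\R^m)$ one has $[h,v]=(\id,\,v\circ h-v)$, so the lemma is equivalent to the additive identity you wrote down. The Mather-type telescoping is also the right heuristic. However, there is a genuine gap. The function $V_\sigma(u)=\sum_{k\ge0}u\circ\sigma^k$ is \emph{not} an element of $\C_c(\R^m)$: it fails to be continuous at the accumulation point $p$ of the shrinking copies $\sigma^{-k}(B)$. For $x\in\sigma^{-k}(B)$ one has $V_\sigma(u)(x)=u(\sigma^k(x))$ with $\sigma^k(x)\in B$ arbitrary, so picking $x_k\to p$ with $\sigma^k(x_k)$ equal to a fixed interior point of $\supp(u)$ gives $V_\sigma(u)(x_k)=u(\sigma^k(x_k))\not\to 0=V_\sigma(u)(p)$. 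This is precisely the difference with Lemma~2.4: there the conjugates $\phi^k h\phi^{-k}$ tend to $\id$ because the target balls shrink, whereas here the transported \emph{values} of $u$ do not decay at all. Your one-commutator claim $u=[\sigma,V_\sigma(u)]^{-1}$ therefore fails in the stated group, and the four-piece partition-of-unity scheme inherits the same defect on each piece.

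This is exactly why the lemma is stated with four commutators and why the author calls the proof in \cite{ry3} ``sophisticated'': the point is to arrange the shifts and the decomposition of $u$ so that the infinite tails either decay or cancel, forcing all four $v_i^{\pm}$ to be genuinely continuous with compact support. That cancellation mechanism is the missing idea, and it is the actual content of \cite{ry3}, Lemma~2.1; your sketch has not supplied it.
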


In view of Lemma 6.1 we have
\begin{thm}
If $B_M$ is compact then the group $\H_G(M)$ is locally
continuously perfect. Moreover, $r_{\H_G(M)}\leq (4\dim
G+1)d_{B_M}$.
\end{thm}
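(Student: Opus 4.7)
The plan is to emulate the proof of Theorem~1.2 on the base $B_M$ and handle the fibre-direction contribution via Lemma~6.1. Pick a cover $(B_i)_{i=1}^{d_{B_M}}$ of $B_M$ by balls over which the bundle $p:M\to B_M$ trivialises as $p^{-1}(B_i)\cong B_i\times G$, and consider $h\in\H_G(M)$ in a sufficiently small neighbourhood of the identity.

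\textbf{Horizontal decomposition.} Project $h$ to $\bar h\in\H(B_M)$; Proposition~5.2 applied to $B_M$ gives $\bar h=\bar\sigma_1(h)\cdots\bar\sigma_{d_{B_M}}(h)$ with $\supp(\bar\sigma_i)\subseteq B_i$, continuously in $h$. Lift each $\bar\sigma_i(h)$ via the trivialisation to the equivariant homeomorphism $\hat\sigma_i(h)(x,g)=(\bar\sigma_i(h)(x),g)$, extended by the identity outside $p^{-1}(B_i)$, and set $\tilde h:=\hat\sigma_1(h)\cdots\hat\sigma_{d_{B_M}}(h)$. Since the basic lemma (Lemma~2.4 combined with Corollary~2.7) lifts equivariantly via the trivialisation, each $\hat\sigma_i(h)$ is a single commutator in $\H_G(M)$, yielding $d_{B_M}$ commutators so far.

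\textbf{Gauge decomposition and chart.} The residue $\gamma:=\tilde h^{-1}h$ projects to $\id_{B_M}$ and hence lies in $\gau(M)$. Repeating the shrinking-and-bump procedure from the proof of Proposition~3.4, one continuously decomposes $\gamma=\gamma_1(h)\cdots\gamma_{d_{B_M}}(h)$ with $\supp(\gamma_i)\subseteq p^{-1}(B_i)$, and in the $i$-th trivialisation $\gamma_i(h)(x,g)=(x,f_i(h)(x)g)$ for some $f_i(h)\in\C_c(B_i,G)$. Using the exponential chart $\Phi(t_1,\dots,t_q)=\exp(t_1X_1)\cdots\exp(t_qX_q)$ with $q=\dim G$, write $f_i(h)=\prod_{j=1}^q\exp(u_{i,j}(h)X_j)$ with $u_{i,j}(h)\in\C_c(B_i,\R)$, and split $\gamma_i(h)=\prod_{j=1}^q\gamma_{i,j}(h)$, where $\gamma_{i,j}(h)(x,g)=(x,\exp(u_{i,j}(h)(x)X_j)g)$.

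\textbf{Applying Lemma~6.1.} For each pair $(i,j)$ define $\iota_{i,j}:\H(\R^m)\t_\tau\C_c(\R^m)\to\H_G(M)$ by sending $\phi$ supported in $B_i$ to the horizontal equivariant homeomorphism $(x,g)\mapsto(\phi^{-1}(x),g)$ and $u$ supported in $B_i$ to the gauge transformation $(x,g)\mapsto(x,\exp(u(x)X_j)g)$. A direct calculation in the semidirect-product formulae shows that, although $\iota_{i,j}$ is not a global group homomorphism, it maps each mixed commutator $[\phi,v]$ (one horizontal, one vertical) to the corresponding commutator in $\H_G(M)$ and is a homomorphism on the pure-vertical subgroup (abelian, because $\{\exp(tX_j)\}$ is a one-parameter subgroup of $G$, so $\exp$ converts the additive group structure on $\C_c$ to pointwise multiplication). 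Applying $\iota_{i,j}$ to the four-commutator identity of Lemma~6.1 with $u=u_{i,j}(h)$ therefore expresses $\gamma_{i,j}(h)$ as a product of four commutators in $\H_G(M)$, continuously in $h$. Summing, the gauge part yields $d_{B_M}\cdot 4\dim G$ commutators; together with the $d_{B_M}$ horizontal commutators this gives $r_{\H_G(M)}\le(4\dim G+1)d_{B_M}$. The chief delicate point is verifying this commutator-respecting property of $\iota_{i,j}$ in spite of its failure of global homomorphy; a secondary issue is making the Proposition~3.4 factorisation compatible with the (non-canonical) horizontal lift $\tilde h$ so that $\gamma=\tilde h^{-1}h$ depends continuously on $h$.
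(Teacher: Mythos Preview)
Your proposal follows essentially the same route as the paper: project to $\H(B_M)$ and factorise via Proposition~5.2, lift the factors and write each as a single commutator via Lemma~2.4, then decompose the residual gauge element through the exponential chart into $q=\dim G$ one-parameter pieces per ball, each expressed as a product of four commutators by Lemma~6.1. The delicate point you flag---transporting the semidirect-product commutator identity of Lemma~6.1 into $\H_G(M)$---is precisely the step the paper absorbs by declaring $f_i\in\H(\R^m)\times_\tau\C_c(\R^m,\R^q)$, i.e.\ by identifying the equivariant homeomorphisms supported in $p^{-1}(B_i)$ with the semidirect product (this identification, and hence the needed homomorphism rather than your partial map $\iota_{i,j}$, is what is set up in the source paper for Lemma~6.1).
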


\begin{proof} Let $(B_i)_{i=1}^d$ be a covering by balls of $B_M$.
 Let $P:\H_G(M)\r\H(B_M)$ be the
homomorphism given by $P(h)(p( x))=p(h(x))$, where $x\in M$. Let
$h\in\U$, where $\U$ is a neighborhood of $\id\in\H_G(M)$. Then
for $\U$ small enough $P(h)$ can be decomposed as $P(h)=g_1\cdots
g_d$ such that $g_i\in\H_{B_i}(B_ M)$, $i=1\ld d$. Then each $g_i$
can be  lifted to $h_i\in\H_G(M)$, i.e. $P(h_i)=g_i$. Thus, due to
Theorem 1.2 it suffices to consider $f=h h_d^{-1}\cdots
h_1^{-1}\in\ker P=\gau(M)$.

Proceeding as in the proof of Prop. 3.4, we can write $f=f_1\cdots
f_d$ where $f_i\in\gau(M)$ and $\supp(f_i)\s p^{-1}(B_i)$.
Shrinking $\U$ we assume that
$f_i\in\H(\rz^m)\t_{\tau}\C_c(\rz^m,\rz^q)$ for all $i$, where
$q=\dim G$.
 We can extend the semi-direct product structure from
$\H(\rz^m)\t_{\tau}\C_c(\rz^m)$ to
$\H(\rz^m)\t_{\tau}\C_c(\rz^m,\rz^q)$, where $\C_c(\rz^m,\rz ^q)$
is the space of compactly supported $\rz^q$-valued functions, by
the formulae $(h,(v_1\ld v_q))=(\id,(v_1\ld v_q)\circ
h)\cdot(h,0)$ and $(\id,(v_1\ld v_q))=(\id,v_1)\cdots(\id,v_q)$.
In view of Lemma 6.1, each $(\id,v_i)$ is written as a product of
four commutators from $\H(\rz^m)\t_{\tau}\C_c(\rz^m,\rz^q)$ with
factors depending continuously on $f$. This completes the proof.
\end{proof}

\begin{cor} Let $M$ be a topological $G$-manifold with one orbit
type. Then $\H_G(M)$ is a locally continuously perfect group.
\end{cor}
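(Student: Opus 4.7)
The plan is to reduce the single-orbit-type case to the free-action case already handled by Theorem 6.2. Fix a representative isotropy subgroup $H\s G$, so that every stabilizer in $M$ is conjugate to $H$, and set $N=N_G(H)$. Consider the fixed-point set
\[
M^H=\{x\in M:\,hx=x\ \text{for all}\ h\in H\},
\]
which, under the single orbit type assumption, is a closed topological submanifold of $M$ on which $H$ acts trivially and $N$ acts. The induced action of $N/H$ on $M^H$ is free, and the standard twisted-product description (see \cite{br}) gives a $G$-equivariant homeomorphism $M\cong G\t_N M^H$ together with a homeomorphism of orbit spaces $M/G\cong M^H/(N/H)$.

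Next I would identify the two equivariant homeomorphism groups. Since any $f\in\H_G(M)$ preserves isotropy subgroups, $f$ restricts to an $N$-equivariant homeomorphism of $M^H$ which, as $H$ acts trivially there, descends to an $(N/H)$-equivariant homeomorphism. Conversely, any element of $\H_{N/H}(M^H)$ extends uniquely via the twisted product $G\t_N M^H$ to a $G$-equivariant homeomorphism of $M$; compactly supported equivariant isotopies correspond similarly. This assignment defines a topological group isomorphism $\H_G(M)\cong \H_{N/H}(M^H)$ in the graph topology.

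With this identification, one applies Theorem 6.2 to the compact Lie group $N/H$ acting freely on $M^H$ (implicitly assuming, as in Theorem 6.2, that the base $M/G$ of the corresponding principal $(N/H)$-bundle is compact). The resulting local continuous perfectness of $\H_{N/H}(M^H)$ transfers through the isomorphism to $\H_G(M)$, and one even obtains a bound $r_{\H_G(M)}\leq (4(\dim G-\dim H)+1)d_{M/G}$.

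The main obstacle will be verifying that the algebraic isomorphism $\H_G(M)\cong \H_{N/H}(M^H)$ is bicontinuous in the graph topology, so that the continuous commutator factors supplied by Theorem 6.2 pull back to continuous ones on $\H_G(M)$. The key ingredients are the local triviality of the fiber bundle $M\r M/G$ together with compactness of $G$, which together guarantee that restriction to $M^H$ and equivariant extension back to $M$ are both continuous operations on the relevant spaces of compactly supported homeomorphisms.
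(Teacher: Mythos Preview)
Your argument is essentially identical to the paper's: both reduce to the free-action situation of Theorem~6.2 by passing to the $N/H$-manifold $M^H$ via the twisted-product description $M\cong G\times_N M^H$ and the resulting topological group isomorphism $\H_G(M)\cong\H_{N/H}(M^H)$. One minor correction: the group acting freely is $N/H$, so the dimension entering the bound should be $\dim(N/H)=\dim N-\dim H$, not $\dim G-\dim H$.
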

\begin{proof}
Indeed, if $H$ is the isotropy group of a point of $M$ then
$M^H=\{x\in M: \, H\,\,\hbox{fixes}\,\,x\}$ is a free
$N^G(H)/H$-manifold, where $N^G(H)$ is the normalizer of $H$ in
$G$. Since $\H_G(M)$ is isomorphic and homeomorphic to
$\H_{N(H)/H}(M^H)$, the corollary  follows from Theorem 6.2.

To explain the relation $\H_G(M)\cong\H_{N(H)/H}(M^H)$, recall
basic facts on the $G$-spaces with one orbit type (see, Bredon
\cite{br}, section II, 5). Let $G$ a compact Lie group and let $X$
be a $T_{3\frac{1}{2}}$ $G$-space with one orbit type $G/H$ (that
is, all isotropy subgroups are conjugated to $H$). Set $N=N^G(H)$
and $X^H=\{x\in X: h.x=x\,,\forall h\in H\}$. Then we have the
homeomorphism $G\t_NX^H\ni[g,x]\mapsto g(x)\in X$. That is, the
total space of the bundle over $G/N$ with the standard fiber $X^H$
associated to the principal $N$-bundle $G\r G/N$ is $G$-equivalent
to $X$. In particular, the inclusion $X^H\subset X$ induces a
homeomorphism $X^H/N\cong X/G$.

Denote $K=N/H$. Given an arbitrary $G$-space $Y$, there is a
bijection $\kappa_{X,Y}$ between $G$-equivariant mappings $X\r Y$
and $K$-equivariant mappings $X^H\r Y^H$ such that
$\kappa_{X,Y}(f)=f|_{X^H}$.

Notice that $K$ acts freely on $X^H$ and the homeomorphism
$X^H/N\cong X/G$ induces the homeomorphism $X^H/K\cong X/G$. In
particular, we get the principal $K$-bundle $\pi_X:X^H\r X/G$,
where $\pi_X$ is the restriction to $X^H$ of the projection
$\pi:X\r X/G$.
\end{proof}

\end{document}